\newcommand{\N}{\mathbb{N}}
\newcommand{\Z}{\mathbb{Z}}
\numberwithin{equation}{section}
\newcommand{\cD}{\mathcal{D}}
\newcommand{\cP}{\mathcal{P}}
\newcommand{\al}{\alpha}
\newcommand{\be}{\beta}
\newcommand{\ga}{\gamma}
\newcommand{\de}{\delta}
\newcommand{\eps}{\varepsilon}
\DeclareMathOperator{\SL}{SL}
\DeclareMathOperator{\GL}{GL}
\DeclareMathOperator{\Pol}{Pol}
\newtheorem{Lemma}{Lemma}[section]
\newtheorem{Theorem}[Lemma]{Theorem}
\newtheorem{Proposition}[Lemma]{Proposition}
\newtheorem{Corollary}[Lemma]{Corollary}
\newtheorem{cor}[Lemma]{Corollary}
\theoremstyle{definition}
\newtheorem{Definition}[Lemma]{Definition}
\newtheorem{Remark}[Lemma]{Remark}
\newtheorem{Remarks}[Lemma]{Remarks}
\newtheorem{Example}[Lemma]{Example}
\begin{document}

\setlength{\parindent}{0pt}
\setlength{\parskip}{2pt}

\title[Dissected polygons and generalized frieze patterns]{
Conway-Coxeter friezes and beyond: \\
Polynomially weighted walks around dissected polygons
and generalized frieze patterns}

\author{Christine Bessenrodt}
\address{Institut f\"{u}r Algebra, Zahlentheorie und Diskrete
  Mathematik, Fa\-kul\-t\"{a}t f\"{u}r Ma\-the\-ma\-tik und Physik, Leibniz
  Universit\"{a}t Hannover, Welfengarten 1, 30167 Hannover, Germany}

\thanks{November 23, 2014}

\keywords{Frieze pattern, polygon dissection,
weight matrix, determinant, diagonal form of a matrix,
polynomials}

\subjclass[2010]{05E99,  05A15, 15A21, 13F60, 51M20}

\begin{abstract}
Conway and Coxeter introduced frieze patterns
in 1973 and classified them via triangulated polygons.
The determinant of the matrix
associated to a frieze table was
computed explicitly by Broline, Crowe
and Isaacs in 1974, a result generalized
2012 by Baur and Marsh in the context of cluster algebras
of type~A.
Higher angulations of polygons and associated generalized
frieze patterns were studied in a joint paper with Holm and
J\o rgensen.
Here we take these results further; we allow arbitrary dissections and
introduce polynomially weighted walks around such dissected polygons.
The corresponding generalized frieze table satisfies a
complementary symmetry condition;
its  determinant is a multisymmetric multivariate polynomial
that is given explicitly.
But even more, the frieze matrix may be transformed over a
ring of Laurent polynomials to a nice diagonal form
generalizing the Smith normal form result given in \cite{BHJ}.
Considering the generalized polynomial frieze in this context
it is also shown that the non-zero local determinants
are monomials that are given explicitly, depending on the geometry of
the dissected polygon.
\end{abstract}

\maketitle

\section{Introduction}
\label{sec:introduction}

Conway and Coxeter \cite{CC1,CC2} defined arithmetical friezes
by a local determinant condition and classified them via
triangulated polygons; the friezes have a glide reflection symmetry.
The geometry of these triangulations was then studied by
Broline, Crowe and Isaacs \cite{BCI};
they associated a matrix to these which was shown to be symmetric,
and they computed its determinant to be $-(-2)^{n-2}$, for
any triangulated $n$-gon.

These notions have found renewed interest in recent years in the
context of cluster algebras, see for example \cite{ARS, BM, CaCh}.
Indeed, Baur and Marsh \cite{BM} generalized the determinant
formula in the context of cluster algebras of type~$A$,
to a version involving cluster variables.

Another generalization and refinement of the results on
Conway-Coxeter friezes
was investigated recently in joint work with Holm and J\o rgensen
on $d$-angulations of polygons \cite{BHJ}.
Again, a matrix associated to such $d$-angulations with entries
counting suitable paths turned out to be symmetric,
and its determinant and Smith normal forms were computed;
these are independent of the
particular $d$-angulation.
We have also generalized the notion of friezes in this context
by weakening the local determinant condition.
It should also be pointed out that
in recent work by Holm and J\o rgensen,
generalized friezes were categorified via a modified
Caldero-Chapoton map \cite{HJ1, HJ2}.

Here the investigations of \cite{BHJ} are taken
further in two combinatorial directions.
On the one hand, instead of $d$-angulations we study arbitrary
polygon dissections; the results from \cite{BHJ}
have natural generalizations in this wider context,
Furthermore, going far beyond the arithmetical results,
we do not only count suitable paths along the polygons,
but we consider weighted walks around the dissected polygons.

The corresponding associated weight matrices then have
(multivariate) polynomial entries;
they are not quite symmetric, but
satisfy a natural complementarity condition
(see Theorem~\ref{thm:complementarysymmetry} and Corollary~\ref{cor:complementarysymmetry}).
Surprisingly, as seen in the arithmetical situation,
the determinant only depends on the pieces of the dissection but
not on the specific way the pieces are glued together in the dissection;
an explicit formula for the determinant of a weight matrix
is given in Theorem~\ref{thm:detandsmith}.
In fact, we are even able to provide an equivalent diagonal form
of the weight matrix over a suitable ring of Laurent polynomials.

As in the arithmetical situation, we associate a
generalized frieze pattern to a dissected polygon;
this is the frieze of weight polynomials, which only
has a translation symmetry due to the complementarity condition.
Furthermore, the local $2\times 2$ determinants 0 and 1
in the generalized arithmetical friezes
are seen to be specializations of 0's and explicit
monomials in the generalized polynomial friezes.
The condition when we have a nonzero determinant and
the explicit description of the monomials
are given in Theorem~\ref{thm:zigzag}.

\section{From arcs around triangulated polygons to walks around dissected polygons}

A frieze in the sense of Conway and Coxeter \cite{CC1,CC2} is
a pattern of natural numbers arranged in bi-infinite interlaced
rows as in the example further  below where the top and bottom rows consist
only of 1's and every set of four adjacent numbers located in a diamond
$$\begin{array}{ccc}
&b& \\ a&&d\\&c&
\end{array}$$
satisfies the determinant condition $ad-bc=1$.
We say that the frieze pattern is of
{\em height}~$n$ if it has $n$ bi-infinite rows.
For example, here is a frieze pattern of height~4:
$$
\begin{array}{cccccccccccccccccccccccc}
\ldots & 1 & & 1 & & 1 & & 1 & & {\bf 1} & & 1 & & 1 & & 1 & & 1 & & 1 & & 1 & \ldots \\
& \ldots & 1 & & 3 & & 1 & & {\bf 2} & & {\bf 2} & & 1 & & 3 & & 1 & & 2 & & 2 &
\ldots & & \\
\ldots & 1 & & 2 & & 2 & & {\bf 1} & & {\bf 3} & & {\bf 1} & & 2 & & 2 & & 1 & & 3 & & 1 & \ldots \\
& \ldots & 1 & & 1 & & {\bf 1} & & {\bf 1} & & {\bf 1} & & {\bf 1} & & 1 & & 1 & & 1 & & 1 &
\ldots & & \\
\end{array}
$$
In bold, a {\em fundamental region} of the frieze pattern
is marked; as was noticed by Conway and Coxeter,
frieze patterns are periodic and they arise
from a fundamental region by  applying glide reflections.

As their main result, they showed that frieze patterns
can be classified via triangulations of (convex) polygons.
More precisely, a triangulated polygon with $n+1$ vertices
corresponds to a frieze pattern of height $n$ whose second row
is obtained by walking around the polygon and counting at  each
vertex the number of incident triangles. The other rows are then
easily computed using the determinant condition.

For example, the triangulated pentagon corresponding
to the frieze pattern above is pictured here
(with the vertices numbered counterclockwise):
\[
  \begin{tikzpicture}[auto]
    \node[name=s, shape=regular polygon, regular polygon sides=5, minimum size=3cm, draw] {};
    \draw[thick] (s.corner 2) to (s.corner 4);
    \draw[thick] (s.corner 2) to (s.corner 5);
    \draw[shift=(s.corner 1)]  node[above]  {{\small 1}};
  \draw[shift=(s.corner 2)]  node[left]  {{\small 2}};
  \draw[shift=(s.corner 3)]  node[below]  {{\small 3}};
  \draw[shift=(s.corner 4)]  node[below]  {{\small 4}};
  \draw[shift=(s.corner 5)]  node[right]  {{\small 5}};
  \end{tikzpicture}
\]
Counting the incident triangles at the vertices (starting at vertex~1
and going counterclockwise)
gives the sequence
$1$, $3$, $1$, $2$, $2$, whose repetition leads
to the second row in the frieze pattern above.

\smallskip

Broline, Crowe and Isaacs \cite{BCI} gave a geometrical
interpretation also for the other rows of the frieze pattern
by counting suitable sequences of distinct triangles
for any two vertices, called {\em arcs}.
This was generalized in the context of $d$-angulations
to the notion of {\em $d$-paths}~\cite{BHJ}.

It turned out that this notion was also the right concept to generalize
a determinant formula for the matrix of arc numbers of a
triangulated polygon in \cite{BCI} to one for the matrix of $d$-path
numbers of a $d$-angulated polygon in \cite{BHJ}; moreover, the result
was refined in \cite{BHJ} so as to give also the Smith normal form
of the matrix.
Furthermore, putting the $d$-path numbers into a frieze produced
a generalized frieze pattern where all
the local $2\times 2$-determinants are 0 or~1, and
the position of the 1's is explained geometrically
from the given $d$-angulation.

\medskip

In the following, arbitrary dissected convex polygons are considered
and in this context we generalize the notion of $d$-paths
in $d$-angulated polygons (and thus of arcs of triangulated polygons).
We first introduce some notation and we
take a brief look at the arithmetical results,
and in the next section we take the main step and refine the
arithmetics of counting walks to a polynomially weighted version.
\medskip

We consider a convex $n$-gon $P_n$,
dissected by $m-1$
(pairwise noncrossing) diagonals into $m$ polygons
$\al_1, \ldots, \al_m$ with $d_1, \ldots, d_m$ vertices,
respectively; these polygons are called the {\em pieces}
of the dissection~$\cD=\{\al_1,\ldots,\al_m\}$.
A $d$-gon will also be called a  {\em polygon of degree~$d$};
for a piece $\al$ we also write $d_\al$ for its degree.
The multiset of degrees of the pieces of a dissection $\cD$ is
called its {\em type}.

We label the vertices of $P_n$
by $1,2,\ldots,n$ in counterclockwise order;
this numbering is occasionally extended and then
taken modulo~$n$ below.

\begin{Definition} \label{def:walk}
Let $n\ge 3$ and $m\in \mathbb{N}$.
Let $\cD$ be a dissection of
$P_n$ into $m$ polygons as above.
Let $i$ and $j$ be vertices of
$P_n$.

A {\em (counterclockwise) walk $w$ (around $P_n$) from $i$ to $j$}
is a sequence
$$w=(p_{i+1},p_{i+2},\ldots,
p_{j-2},p_{j-1})$$
of
pieces of the dissection $\cD$
that satisfies the following properties:
\begin{itemize}
\item[{(i)}] The polygon $p_k$ is incident to vertex~$k$,
for all $k\in \{i+1, i+2,\ldots,j-1\}$.
\item[{(ii)}]  For any $d$, a polygon $\al \in \cD$ of degree~$d$
appears at most $d-2$ times among the
pieces $p_{i+1},p_{i+2},
\ldots, p_{j-2},p_{j-1}$.
\end{itemize}
Counting the walks gives a matrix $M_\cD=(m_{ij})$, where
$m_{ij}$ is the number of walks from vertex $i$ to vertex~$j$.
\end{Definition}

Thus, a walk from $i$ to $j$ around a dissected polygon $P_n$
moves counterclockwise from $i$ to $j$ around the $n$-gon
and at each intermediate vertex picks an incident polygon,
such that pieces of degree $d$ are chosen at most
$d-2$ times.
For $i=j$, there is no walk from $i$ to~$i$, while for $j=i+1$,
the empty sequence gives exactly one walk from $i$ to~$i+1$.

Note that in the case of a $d$-angulation $\cD$ of the polygon $P_n$
the walks are exactly the $d$-paths introduced in~\cite{BHJ}.
In particular, for $d=3$ this
provides a further generalization of the definition of arcs
in \cite{BCI} for triangulations.

\begin{Example}
For an illustration of Definition~\ref{def:walk},
we consider the following
dissection of a 7-gon into three triangles and a quadrangle:
\[
  \begin{tikzpicture}[auto]
    \node[name=s, shape=regular polygon, regular polygon sides=7, minimum size=3cm, draw] {};
 \draw[thick] (s.corner 2) to node[above=2pt] {$\al$} (s.corner 7);
 \draw[thick] (s.corner 3) to node[above=9pt] {$\be$}
 node[below=5pt] {$\ga$} (s.corner 6);
 \draw[thick] (s.corner 4) to node[below=1pt] {$\de$}
 (s.corner 6);
  \draw[shift=(s.corner 1)]  node[above]  {{\small 1}};
  \draw[shift=(s.corner 2)]  node[above]  {{\small 2}};
  \draw[shift=(s.corner 3)]  node[left]  {{\small 3}};
  \draw[shift=(s.corner 4)]  node[left]  {{\small 4}};
  \draw[shift=(s.corner 5)]  node[right]  {{\small 5}};
  \draw[shift=(s.corner 6)]  node[right]  {{\small 6}};
  \draw[shift=(s.corner 7)]  node[right]  {{\small 7}};
  \end{tikzpicture}
\]

By Definition~\ref{def:walk}, no $d$-gon is allowed to appear more than $d-2$ times in a walk. We determine the walks from vertex~1 to any vertex $j>2$.

Let $j=3$; at vertex $2$ we may choose $\al$ or $\be$, hence the walks are $(\al)$ and  $(\be)\}$.
For $j=4$, we may choose $\al$ or $\be$ at
vertex~$2$, and in both cases the choices $\be$ or $\ga$ at vertex~$3$
are possible (as $\be$ is a 4-gon). Hence we have the walks $(\al,\be), (\al,\ga),(\be,\be),(\be,\ga)$.
Next, let $j=5$; the walks just listed can all be extended by $\de$,
and we also have the walks $(\al,\be,\ga),(\be,\be,\ga)$.
For $j=6$, we note that at vertex~$5$ we have to choose $\de$,
and since $\de$ may only appear once in a walk, this leaves only two walks
$(\al,\be,\ga,\de),(\be,\be,\ga,\de)$ from vertex~1 to vertex~6.
For $j=7$, the only possible walk is $(\al,\be,\ga,\de,\be)$.

We will see later that there is indeed always only one walk from vertex
$i$ to vertex $i-1$ (labels taken mod~$n$).

In our example, we have just computed the first row of the
counting matrix $M_{\cD}$, which is in full given here:
$$
M_{\cD}=
\begin{pmatrix}
0&1&2&4&6&2&1\\
1&0&1&2&3&1&1\\
2&1&0&1&2&1&1\\
4&2&1&0&1&1&2\\
6&3&2&1&0&1&3\\
2&1&1&1&1&0&1\\
1&1&1&2&3&1&0
\end{pmatrix}
$$
This is a symmetric matrix with  determinant
$$\det M_\cD = 24\:,$$
and furthermore,
there are matrices $U,V \in \SL(7,\Z)$ such that
$$UM_\cD V =  \Delta(3,2,2,2,1,1,1)\:,$$
where we denote by $\Delta(a_1,\ldots,a_n)$
a diagonal matrix with diagonal entries $a_1,\ldots,a_n$.
\end{Example}

For arbitrary dissections, we have
the following result as a consequence of later theorems;
these arithmetical properties generalize the results in~\cite{BHJ} for
$d$-angulations and thus in particular the results in~\cite{BCI} for triangulated polygons.
Observe that the determinant and even the diagonal form do not depend
on the specific dissection but only on the sizes of its pieces.

\begin{Theorem}\label{thm:arithmetic}
Let $n\in \N$, $n\geq 3$.
Let $\cD=\{\al_1,\ldots,\al_m\}$ be a dissection of~$P_n$
of type $\{d_1,\ldots,d_m\}$, and let $M_\cD$ be the corresponding
matrix.
Then the matrix $M_\cD$ is symmetric, with
determinant
$$\det M_\cD = (-1)^{n-1} \prod_{j=1}^m (d_j-1) \:.$$
Furthermore, there are matrices $U,V \in \GL(n,\Z)$ with
$$UM_\cD V = \Delta(d_1-1,\ldots,d_m-1,1,\ldots,1)\:.$$
\end{Theorem}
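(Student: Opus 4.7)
The plan is to derive Theorem~\ref{thm:arithmetic} by specialization from the polynomially weighted results proved in the later sections. First I would observe that, under the ring homomorphism sending every weight indeterminate to $1$, the polynomial weight matrix of the next section maps to $M_\cD$: each walk contributes its monomial weight, which becomes $1$, so the $(i,j)$-entry collapses to the count of walks from $i$ to $j$. Everything in Theorem~\ref{thm:arithmetic} will then follow from the polynomial statements by applying this substitution.

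For symmetry of $M_\cD$, I would invoke the complementary symmetry of the polynomial weight matrix (Theorem~\ref{thm:complementarysymmetry} and Corollary~\ref{cor:complementarysymmetry}). At the level of counts this condition amounts to the bijection sending a walk $w$ from $i$ to $j$, which uses each piece $\al$ exactly $k_\al$ times with $k_\al \le d_\al - 2$, to a walk from $j$ to $i$ using $\al$ exactly $d_\al - 2 - k_\al$ times. Under the variables-to-$1$ specialization the complementation collapses to the bare identity $m_{ij} = m_{ji}$.

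The determinant formula then follows directly from Theorem~\ref{thm:detandsmith}: the polynomial determinant is an explicit signed product of piece-factors, one per $\al_j$, and specialization collapses each such factor to the integer $d_j - 1$ while the overall sign $(-1)^{n-1}$ is preserved. For the integer Smith normal form, I would start from the diagonal form $\tilde U M_\cD^{\mathrm{pol}} \tilde V = \Delta(\ldots)$ over the Laurent polynomial ring provided by Theorem~\ref{thm:detandsmith} and apply the same specialization. Because $\tilde U$ and $\tilde V$ have unit determinants in the Laurent polynomial ring, necessarily monomials, their specializations have determinants $\pm 1$ and therefore lie in $\GL(n, \Z)$; the right-hand side becomes $\Delta(d_1 - 1, \ldots, d_m - 1, 1, \ldots, 1)$ as claimed (the count $n - m$ of trailing ones matches the identity $\sum_j d_j = n + 2(m-1)$).

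The main obstacle will be verifying that the Laurent-polynomial diagonal form in Theorem~\ref{thm:detandsmith} is realized by transition matrices whose determinants are actually monomials (equivalently, units of the Laurent polynomial ring that remain units under $x \mapsto 1$). This is the one non-formal point: one must ensure the diagonalization procedure built into Theorem~\ref{thm:detandsmith} is done with elementary row and column operations that are themselves invertible over the Laurent polynomial ring with monomial determinant, so that the specialization lands in $\GL(n,\Z)$ rather than merely $\SL(n,\Q)$. Once this integrality is in hand, every assertion of Theorem~\ref{thm:arithmetic} is immediate from the substitution of $1$ for each weight variable in the polynomial statements.
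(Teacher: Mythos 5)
Your proposal is correct and is essentially the derivation the paper intends: the theorem is stated there as ``a consequence of later theorems,'' namely by specializing all weight indeterminates to $1$ in Theorem~\ref{thm:complementarysymmetry} (symmetry), Theorem~\ref{thm:detformula} (determinant), and Theorem~\ref{thm:detandsmith} (diagonal form). The one ``obstacle'' you flag is in fact automatic: any ring homomorphism $R\to\Z$ (here $x_i,q_j\mapsto 1$) carries $\GL(n,R)$ into $\GL(n,\Z)$, since the image of an inverse is an inverse, so no further control on the transition matrices is needed.
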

\medskip

These arithmetical assertions may also be obtained by analyzing
and suitably modifying some of the proofs in~\cite{BHJ}.
Note that in relation to the associated frieze pattern, the symmetry of the
matrix corresponds to the glide reflection symmetry of the frieze.

\section{Weighted walks around dissected polygons}

In this section  we refine the
arithmetics of counting suitable paths (as in \cite{BHJ})
to a weighted version for walks.

We keep the notation from the previous section, i.e.,
we consider a convex $n$-gon~$P_n$ with a
dissection~$\cD=\{\al_1,\ldots,\al_m\}$
of type $\{d_1,\ldots,d_m\}$,
and  the vertices are labelled
$1,2,\ldots,n$ in counterclockwise order.

\begin{Definition} \label{def:weights}
To each polygon $\al_l$ in $\cD$
we associate an indeterminate $x_l=x(\al_l)$; occasionally we will also write $x_\al$ for the indeterminate
to a polygon $\al$ in $\cD$.
We define the {\em weight} of a (counterclockwise)
walk $w= (p_{i+1},p_{i+2},\ldots,
p_{j-2},p_{j-1})$ from $i$ to $j$
to be the monomial
$$x^w=\prod_{k=i+1}^{j-1} x(p_k)\:.
$$
Then we define the $n\times n$ {\em weight matrix}
$W_\cD(x)=(w_{i,j})_{1\leq i,j\leq n}$ associated to
the dissection~$\cD$
by setting its $(i,j)$-entry to be the polynomial
$$
w_{i,j} =\sum_{w: \text{ walk from $i$ to $j$}} x^w \in \Z[x_1,\ldots,x_m]=\Z[x]
\:.$$
\end{Definition}

\begin{Remarks} \label{rem:specialweights}
(i) For the polynomials $w_{i,j}$ associated
to a dissection $\cD$ of $P_n$
we have the properties:
$w_{i,i}=0$ and $w_{i,i+1}=1$,
where the only walk from $i$ to $i+1$ (the empty sequence)
is weighted by~1.

(ii) If the dissection $\cD$ of $P_n$ is of type
$\{d_1, \ldots, d_m\}$, then $n=2+\sum_{k=1}^m (d_k-2)$.
As any walk from $i+1$ to $i$ (around $P_n$) is of length $n-2$,
it has to contain every $d$-gon of $\cD$ exactly $d-2$ times,
hence the weight of such a walk is $\prod_{k=1}^m x_k^{d_k-2}$.
\end{Remarks}

\begin{Example}
We illustrate Definition~\ref{def:weights}
with the same dissection of a 7-gon into three triangles and a quadrangle as before:
\[
  \begin{tikzpicture}[auto]
    \node[name=s, shape=regular polygon, regular polygon sides=7, minimum size=3cm, draw] {};
     \draw[thick] (s.corner 2) to node[above=2pt] {$\al$} (s.corner 7);
 \draw[thick] (s.corner 3) to node[above=9pt] {$\be$}
 node[below=5pt] {$\ga$} (s.corner 6);
 \draw[thick] (s.corner 4) to node[below=1pt] {$\de$}
 (s.corner 6);
  \draw[shift=(s.corner 1)]  node[above]  {{\small 1}};
  \draw[shift=(s.corner 2)]  node[above]  {{\small 2}};
  \draw[shift=(s.corner 3)]  node[left]  {{\small 3}};
  \draw[shift=(s.corner 4)]  node[left]  {{\small 4}};
  \draw[shift=(s.corner 5)]  node[right]  {{\small 5}};
  \draw[shift=(s.corner 6)]  node[right]  {{\small 6}};
  \draw[shift=(s.corner 7)]  node[right]  {{\small 7}};
  \end{tikzpicture}
\]
We associate to the pieces $\al$, $\be$, $\ga$, $\de$ the weight
indeterminates $a,b,c,d$, respectively.

We want to compute the weight polynomials $w_{i,j}$ for $i=1$;
we already know that $w_{1,1}=0$ and $w_{1,2}=1$, so we now take $j>2$.
In the previous section we have already  listed the
corresponding walks  explicitly, and thus
we obtain the following weights.

Let $j=3$; as the possible walks are $(\al)$ or $(\be)$, we have $w_{1,3}=a+b$.
For $j=4$,   the walks $(\al,\be), (\al,\ga),(\be,\be),(\be,\ga)$ give  $w_{1,4}=(a+b)(b+c)$.
Next, let $j=5$; the walks determined before give
$w_{1,5}=(a+b)(b+c)d+(a+b)bc$.
For $j=6$, we only had the two walks
$(\al,\be,\ga,\de),(\be,\be,\ga,\de)$, hence
$w_{1,6}=(a+b)bcd$.
For $j=7$, the only possible walk was $(\al,\be,\ga,\de,\be)$, thus
$w_{1,7}=ab^2cd$.

Thus we have computed the first row of the weight matrix $W_\cD$ given here:
$$\tiny
\begin{pmatrix}
0&1&a+b&(a+b) (b+c)&(a+b) (b+c) d+(a+b) b c&(a+b) b c d&a b^2 c d\\
a b^2 c d&0&1&b+c&b (c+d)+c d&b c d&b^2 c d\\
(a+b) b c d&a b^2 c d&0&1&c+d&c d&b c d\\
(a+b) (b+c) d&a b (b+c) d&a b^2 c d&0&1&d&(b+c) d\\
(a+b) (b+c+d)&a b (b+c+d)&a b^2 (c+d)&a b^2 c d&0&1&b+c+d\\
a+b&a b&a b^2&a b^2 c&a b^2 c d&0&1\\
1&a&a b&a b (b+c)&a b (b+c) d+a b^2 c&a b^2 c d&0
\end{pmatrix}
$$
The determinant of this matrix is computed
to be
\medskip

$1+{a}^{5}{b}^{10}{c}^{3}{d}^{3}+{a}^{2}{b}^{8}{c}^{2}{d}^{2}+{a}^{5}{b
}^{8}{c}^{3}{d}^{5}+{a}^{2}{b}^{6}{c}^{2}{d}^{4}+{a}^{6}{b}^{12}{c}^{4
}{d}^{6}+{a}^{3}{b}^{10}{c}^{3}{d}^{5}+{a}^{4}{b}^{6}{c}^{2}{d}^{2}+a{
b}^{2}{c}^{3}d+{a}^{5}{b}^{8}{c}^{5}{d}^{3}+{a}^{2}{b}^{6}{c}^{4}{d}^{
2}+{a}^{6}{b}^{12}{c}^{6}{d}^{4}+{a}^{3}{b}^{10}{c}^{5}{d}^{3}+{a}^{5}
{b}^{6}{c}^{5}{d}^{5}+{a}^{2}{b}^{4}{c}^{4}{d}^{4}+{a}^{6}{b}^{10}{c}^
{6}{d}^{6}+{a}^{3}{b}^{8}{c}^{5}{d}^{5}+{a}^{4}{b}^{12}{c}^{6}{d}^{6}+
{a}^{7}{b}^{14}{c}^{7}{d}^{7}+{a}^{4}{b}^{4}{c}^{4}{d}^{2}+a{b}^{4}cd+
{b}^{2}cd{a}^{3}+{a}^{4}{b}^{4}{c}^{2}{d}^{4}+a{b}^{2}c{d}^{3}
$.
\medskip

We observe that this determinant is symmetric in the indeterminates $a,c,d$ corresponding to the three triangles.
Indeed, we can factorize this determinant as
$$
\det W_\cD= (1+a^3b^2cd)\cdot (1+ab^2c^3d)\cdot (1+ab^2cd^3)\cdot(1+ab^4cd+(ab^4cd)^2)\:.
$$
This serves as an illustration of the determinantal result to be proved later
in Theorem~\ref{thm:detformula}.
\end{Example}

\medskip

In the arithmetical situation where all weights are specialized to~1,
the path enumerating matrix is symmetric.
In the generalized setting that we are considering now,
the weight matrix is obviously not symmetric in general (see the example above),
but it has a symmetry with respect to complementing walk weights.
This results in having only a translation symmetry of the corresponding generalized
polynomial frieze (see Section~\ref{sec:friezes}).

\medskip

First we need to define a linear map
which arises from complementing the weights of walks
with respect to the maximal possible weight.

\begin{Definition}
With the notation associated to the dissection $\cD$ of $P_n$ as before,
we set
$$\Pol_\cD (x) =
\{ f \in \Z[x_1, \ldots, x_m]\mid \deg_{x_i}f \leq d_i-2
\text{ for all } i\}\:.$$
Then we define the {\em complementing map}
$$\phi_\cD: \Pol_\cD (x) \to \Pol_\cD (x) $$
by defining it on monomials $x^a=\prod_{i=1}^m x_i^{a_i} \in \Pol_\cD (x)$  to be
$$\phi_\cD(x^a)=\prod_{i=1}^m x_i^{d_i-2-a_i}
$$
and linear extension.

If the dissection $\cD$ is clear from the context, we will also simply
write $\bar f$ instead of~$\phi_\cD(f)$, for  $f\in \Pol_\cD (x)$.
\end{Definition}

\smallskip

In proofs on dissected polygons
we will often use the fact that a nontrivial dissection
always has  a {\em boundary $d$-gon}, i.e.,
a $d$-gon $\al$ where precisely one of the boundary edges
of $\al$ is an interior diagonal and the other
are on the boundary of the dissected polygon; indeed, an easy induction proof
shows that there are always at least two such boundary pieces
in any nontrivial dissection.
In a proof by induction on the number of pieces in a dissection,
$\al$ is then a $d$-gon glued onto a boundary edge of a dissected
polygon with fewer pieces.
This strategy is used for the following result which generalizes
the symmetry property in the arithmetical case;
in fact, the combinatorial arguments used in the arithmetical situation
can be applied with a little extra care here as well.

\begin{Theorem} \label{thm:complementarysymmetry}
Let $n\ge 3$, let $\cD$ be a dissection of
$P_n$ into $m$ pieces,
and let $W_\cD(x_1,\ldots,x_m)=(w_{i,j})$ be the associated
weight matrix for the walks around~$P_n$. Then
$$\overline{w_{i,j}} = w_{j,i}  \qquad
\text{ for all } 1\leq i,j\leq n \:,$$
i.e.,
$W_\cD$ is a complementary symmetric matrix:
$$W_\cD^t=\overline W_\cD=(\overline{w_{i,j}}) \:.$$
\end{Theorem}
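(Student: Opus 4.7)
The plan is to prove the theorem by induction on the number of pieces $m$ of the dissection, mirroring the proof strategy sketched for the arithmetical case but now tracking the weight monomials carefully. The base case $m=1$ is a direct computation: here $\cD=\{P_n\}$, the single piece has degree $n$, and the unique walk from $i$ to $j$ has weight $x^{(j-i-1)\bmod n}$, while $\phi_\cD(x^k)=x^{n-2-k}$, so $\overline{w_{i,j}} = x^{n-2-((j-i-1)\bmod n)} = w_{j,i}$.

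For the inductive step, I would pick a boundary piece $\al$ of degree $d$ with vertices $v_0,v_1,\ldots,v_{d-1}$ in counterclockwise order, the diagonal $v_0v_{d-1}$ being the interior edge; then $\cD'=\cD\setminus\{\al\}$ is a dissection of $P_{n'}$ with $n'=n-(d-2)$ pieces, and by induction $\overline{w'_{i,j}}^{\cD'}=w'_{j,i}$ for all vertices $i,j$ of $P_{n'}$. The goal is to express $w_{i,j}$ in $\cD$ in terms of $w'_{\cdot,\cdot}$ and verify the complementary symmetry in each case. The principal cases are: (a) $i,j$ both outer (i.e., in $V(P_{n'})$), with the counterclockwise arc from $i$ to $j$ either containing all the pendants $v_1,\ldots,v_{d-2}$ or none of them; (b) exactly one of $i,j$ is a pendant; (c) both $i,j$ are pendants. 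In case (a), if the arc contains the pendants, then $\al$ is forced at each pendant (contributing $x_\al^{d-2}$), and since this saturates the constraint $a_\al\le d-2$, no further $\al$ may be chosen at $v_0$ or $v_{d-1}$; hence $w_{i,j}=x_\al^{d-2}w'_{i,j}$. If the arc contains no pendants, a contiguity argument shows it contains neither $v_0$ nor $v_{d-1}$, so $\al$ is never incident along the arc and $w_{i,j}=w'_{i,j}$.

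The verification of $\overline{w_{i,j}}=w_{j,i}$ then hinges on the elementary identity $\phi_\cD(x_\al^{d-2}f)=\phi_{\cD'}(f)$ and $\phi_\cD(f)=x_\al^{d-2}\phi_{\cD'}(f)$ for any $f\in\Z[x_\be:\be\ne\al]$; combined with the inductive hypothesis, both subcases of (a) produce matching expressions for the two sides. The main obstacle lies in cases (b) and (c): if, say, $i=v_l$ is a pendant, the arc forces $\al$ at $v_{l+1},\ldots,v_{d-2}$ and one must also analyze the choice at $v_{d-1}$ (and possibly at $v_0$ when the arc wraps around). The key technical observation here comes from Remark~\ref{rem:specialweights}(ii) applied to $\cD'$: the walk from $v_{d-1}$ to $v_0$ in $\cD'$ is a full wrap of weight $\prod_{\be\in\cD'}x_\be^{d_\be-2}$, saturating every $\cD'$-piece count, so any additional $\cD'$-choice at $v_0$ or $v_{d-1}$ violates the constraints and contributes zero. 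This collapses most split terms, leaving closed-form expressions such as $w_{v_l,j}=x_\al^{d-2-l}w'_{v_0,j}+x_\al^{d-1-l}w'_{v_{d-1},j}$ in case (b), and pure monomials times $\prod x_\be^{d_\be-2}$ for the wrap-around subcases, all of which can be matched with $w_{j,i}$ by a direct application of $\phi_\cD$ and the inductive hypothesis.
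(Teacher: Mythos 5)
Your proposal takes essentially the same route as the paper's proof: induction on the number of pieces by cutting off a boundary $d$-gon, with the same three-way case split (both vertices outer, one vertex a pendant, both pendants), the same two-term splitting formulas according to the choice at $v_0$ resp.\ $v_{d-1}$, and the same saturation argument (every $\cD'$-piece already used to capacity on a full wrap, plus the inductive hypothesis giving uniqueness of that wrap) for the both-pendants case. The steps you sketch are correct, so no further comparison is needed.
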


\begin{proof}
We prove the result by induction on~$m$.
For $m=1$, we have just an $n$-gon $\al=\al_1=P_n$ (with no diagonals).
The only walk from any vertex $i$ to a different vertex $j$ is
$p_{i+1}=\al,\al,\ldots,p_{j-1}=\al$, hence
we have $w_{i,j}w_{j,i}=x_1^{n-2}$
for all $i\neq j$,
coming from the choice of $\al$ for each of
the $n-2$ vertices different from $i,j$.
Thus $\overline{w_{i,j}}=w_{j,i}$.

Now assume that we have already proved the claim for dissections of
polygons with at most $m-1$ pieces.
We consider a dissection~$\cD$ of a polygon $\cP$ into $m>1$ pieces
and want to prove complementary symmetry for its weight
matrix $W_\cD(x)=(w_{i,j})$.
Let $\al=\al_m$ be a boundary $d$-gon of the dissection
$\cD$ of $\cP=P_{n+d-2}$;
w.l.o.g.\ we may assume that its edge in the interior of $\cP$
is a diagonal between the vertices $1$ and~$n$,
so that $\al$ has vertices $1,n,n+1,\ldots, n+d-2$, as in the
following figure.
\[
  \begin{tikzpicture}[auto]
    \node[name=s, shape=regular polygon, regular polygon sides=16, minimum size=3cm, draw] {};
 \draw[thick] (s.corner 1) to (s.corner 12);
  \draw[shift=(s.corner 14)]  node[left=5pt]  {$\al$};
  \draw[shift=(s.corner 6)]  node[right=20pt]  {$P_n$};
  \draw[shift=(s.corner 1)]  node[above]  {{\small 1}};
  \draw[shift=(s.corner 2)]  node[above]  {{\small 2}};
  \draw[shift=(s.corner 12)]  node[right]  {{\small $n$}};
  \draw[shift=(s.corner 13)]  node[right]  {{\small $n+1$}};
  \draw[shift=(s.corner 16)]  node[right]  {{\small $n+d-2$}};
  \draw[shift=(s.corner 11)]  node[right]  {{\small $n-1$}};
  \end{tikzpicture}
\]
We denote by $P_n$ the $n$-gon with vertices $1,2,\ldots,n$
obtained from $\cP=P_{n+d-2}$ by cutting off~$\al$;
let $\cD'$ be the dissection of $P_n$ obtained from $\cD$
by removing~$\al$, and let
$W'=W_{\cD'}(x')=W_{\cD'}(x_1,\ldots,x_{m-1})=(w_{i,j}')_{1\le i,j\le n}$
be the corresponding weight matrix.
We already know that $w_{i,i}=0$ and
$w_{i,i}'=0$ for all vertices $i$ in the respective dissected polygons.

{\em Case~1:}
Let $i,j \in \{1,\ldots,n\}$ with $i< j$.
We clearly have $w_{i,j}=w_{i,j}'$, as the walks considered
and their weights  are the same in both dissections.
\smallskip

On the other hand,  there is a bijection
mapping walks from $j$ to $i$ in $P_n$ to walks from $j$ to $i$
in $P_{n+d-2}$ given by inserting the $d$-gon $\al$ with multiplicity
$d-2$:
$$(p_{j+1},\ldots,p_n,p_1,\ldots,p_{i-1})
\mapsto  (p_{j+1},\ldots,p_n,p_{n+1}=\al, \al,\ldots, \al, p_{n+d-2}
=\al,  p_1,\ldots,p_{i-1})\:. $$
Note here that the vertices $n+1,\ldots,n+d-2$ are only incident to the $d$-gon $\al$ in the dissection $\cD$.
Thus $w_{j,i}=w_{j,i}' \cdot  x_m^{d-2}$, and hence applying induction we have
$$\phi_\cD(w_{i,j}) = \phi_\cD(w_{i,j}') =
\phi_{\cD'}(w_{i,j}') \cdot x_m^{d-2}
=
w_{j,i}' \cdot x_m^{d-2}
=w_{j,i}$$
as claimed.

\smallskip

{\em Case~2:}
Next we consider two vertices $i\in \{1,\ldots,n\}$ and
$j\in \{n+1,\ldots,n+d-2\}$.
We claim:
$$w_{i,j}=w_{i,n}'\cdot x_m^{j-n} + w_{i,1}'\cdot x_m^{j-n-1}\:.$$
For this, note that a walk from $i$ to $j$ in $P_{n+d-2}$
has the form
$$(p_{i+1}, \ldots, p_n, p_{n+1}=\al, \al, \ldots, p_{j-1}
=\al),
$$
and we distinguish the cases $p_n=\al$ and $p_n\ne \al$.
The walks with $p_n=\al$ correspond bijectively to walks
$(p_{i+1}, \ldots, p_{n-1})$ from $i$ to $n$ in $P_n$,
the ones with $p_n\ne \al$ correspond bijectively
to walks $(p_{i+1}, \ldots, p_n)$ from $i$ to $1$ in $P_n$.
The corresponding weight polynomials give the two contributions
on the right hand side above.

Similarly,
$$w_{j,i}=w_{1,i}'\cdot x_m^{n+d-1-j}+w_{n,i}'\cdot x_m^{n+d-2-j}\:.$$
Here, walks from $j$ to $i$ in $P_{n+d-2}$
have the form $(p_{j+1}=\al,\al \ldots, p_{n+d-2}=\al,p_1,\ldots, p_{i-1})$,
and we distinguish the cases $p_1=\al$ and $p_1\ne \al$.
The ones with $p_1=\al$ correspond bijectively to walks
$(p_2, \ldots, p_{i-1})$ from $1$ to $i$ in $P_n$,
the ones with $p_1\ne \al$ correspond bijectively
to walks $(p_1, \ldots, p_{i-1})$ from $n$ to $i$ in $P_n$.
Again, the corresponding weight polynomials give the two
contributions on the right hand side above.

Now by induction we have complementary symmetry of $W'$,
and thus we get indeed $\phi_\cD(w_{i,j})=w_{j,i}$.

\smallskip

{\em Case~3:}
Finally, we consider two vertices $i,j\in \{n+1,\ldots,n+d-2\}$,
with $i<j$. We have only one walk from $i$ to $j$ around $P_{n+d-2}$,
giving the weight polynomial $w_{i,j}=x_m^{j-i-1}$.
Now consider a walk from $j$ to $i$ around $P_{n+d-2}$;
this has the form
$$
(p_{j+1}=\al, \ldots, p_{n+d-2}=\al, p_1, p_2, \ldots , p_{n-1},p_n, p_{n+1}=\al, \ldots, p_{i-1}=\al)
\,.$$
Then the sequence $(p_2,\ldots,p_{n-1})$ is a walk
from $1$ to $n$ around $P_n$; as noted before,
here necessarily each polygon of degree $s$ of the dissection $\cD'$
appears exactly $s-2$ times,
hence we must have $p_1=\al=p_n$.
Furthermore, we have by induction
$$w_{1,n}'=\phi_{\cD'}(w_{n,1}')=\phi_{\cD'}(1)
=\prod_{k=1}^{m-1}x_k^{d_k-2}\:,$$
and hence the sequence $(p_2,\ldots,p_{n-1})$ is the unique
walk from $1$ to $n$ around $P_n$.
Thus also the walk from $j$ to $i$ in $P_{n+d-2}$ is
unique, giving the weight polynomial
$$w_{j,i}= x_m^{d-1-j+i}\prod_{k=1}^{m-1}x_k^{d_k-2} =
\phi_\cD(x_m^{j-i-1}) = \phi_\cD(w_{i,j})\:.$$

This completes the proof of the complementary
symmetry of the matrix~$W_\cD$.
\end{proof}
\medskip

\begin{Remark}
Note that the transposed matrix $W_\cD^t$ corresponds to
taking the {\em clockwise} walks around the polygon~$P_n$.
From the original definition of walks it is not clear
that we have this nice complementary relationship between the
weights of the counterclockwise and clockwise walks around~$P_n$.
\end{Remark}

\section{Weight matrices: determinant and diagonal form}\label{sec:detandsmith}

Take a dissected polygon $P_n$ and glue a further $d$-gon
onto one of its edges.
Surprisingly, one observes
that the determinant of the weight matrix for the larger polygon
is independent of the chosen gluing edge.
Indeed, we will see that
the determinant of the weight matrix of any dissection $\cD$
of $P_n$ only depends on its dissection type, i.e., on the multiplicities
of pieces in $\cD$ of the same degree.

More precisely, we provide an explicit formula
for this determinant in the following result
on arbitrary polygon dissections.

\begin{Theorem}\label{thm:detformula}
Let $\cD=\{\al_1, \ldots,\al_m\}$ be a dissection of $P_n$
of type $\{d_1, \ldots, d_m\}$,
with associated indeterminates $x_1,\ldots,x_m$, respectively.
Set $c=\prod_{i=1}^m x_i^{d_i-2}$.
Let $W_\cD(x_1,\ldots,x_m)$ be the corresponding weight matrix.
Then
$$
\det W_\cD(x_1,\ldots,x_m) = (-1)^{n-1} \prod_{i=1}^m \sum_{j=0}^{d_i-2} (c x_i^2)^j =(-1)^{n-1} \prod_{i=1}^m \frac{(cx_i^2)^{d_i-1}-1}{c x_i^2 -1} \:.
$$
\end{Theorem}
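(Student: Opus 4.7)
The plan is to prove the formula by induction on the number $m$ of pieces in $\cD$, writing $s_i=\sum_{j=0}^{d_i-2}(cx_i^2)^j$ so that the claim reads $\det W_\cD=(-1)^{n-1}\prod_{i=1}^m s_i$.

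For the base case $m=1$, $W_\cD$ is the circulant matrix with first row $(0,1,x_1,\ldots,x_1^{n-2})$, since the unique walk from $i$ to $j$ has weight $x_1^{(j-i-1)\bmod n}$. The classical eigenvalue formula for circulants together with $\prod_{k=0}^{n-1}\omega^k=(-1)^{n-1}$ (where $\omega=e^{2\pi\mathrm{i}/n}$) yields $\det W_\cD=(-1)^{n-1}\sum_{j=0}^{n-2}x_1^{nj}$ after a short calculation; this matches the claim since here $c=x_1^{n-2}$.

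For the inductive step, cut off a boundary $d$-gon $\al=\al_m$ glued along the diagonal $\{1,n\}$, so that $\cP=P_{n+d-2}$ decomposes into $P_n$ (dissected as $\cD'$ with $m-1$ pieces and weight matrix $W'=(w'_{ij})$) and $\al$. Splitting $\{1,\ldots,n+d-2\}=I\sqcup J$ with $I=\{1,\ldots,n\}$ and $J=\{n+1,\ldots,n+d-2\}$, the case analysis already carried out in the proof of Theorem~\ref{thm:complementarysymmetry} gives the block expression $W_\cD=\bigl(\begin{smallmatrix}A&B\\ C&D\end{smallmatrix}\bigr)$ where (writing $x=x_m$ and $c'=\prod_{i<m}x_i^{d_i-2}$): $A_{ij}=w'_{ij}$ for $i\le j$ and $A_{ij}=x^{d-2}w'_{ij}$ for $i>j$; the $(i,k)$-entry of $B$ equals $x^{k-1}(xw'_{i,n}+w'_{i,1})$; the $(l,j)$-entry of $C$ equals $x^{d-2-l}(w'_{n,j}+xw'_{1,j})$; and $D_{lk}$ equals $x^{k-l-1}$, $0$, or $c'x^{d-1+k-l}$ according as $l<k$, $l=k$, or $l>k$.

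The key observation is that $B$ and $C$ both have rank one: column $k$ of $B$ is $x$ times column $k-1$, and row $l$ of $C$ is $x$ times row $l+1$. Exploiting this, I would perform the column operations ``column $n+k \mapsto$ column $n+k - x\cdot$ column $n+k-1$'' for $k=2,\ldots,d-2$ together with the analogous row operations on rows of $C$. These kill all but one column of $B$ and one row of $C$, and simultaneously turn most of the $D$-block into a bidiagonal form with $1$'s above the diagonal and $-c'x^{d-1}$ on the diagonal. A Laplace expansion along the resulting sparse rows and columns then factors $\det W_\cD$ as $\det A$ times an explicit expression which simplifies to the contribution $(-1)^{d-2}\,s_m/x^{d-2}$ of the new piece~$\al$.

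The main obstacle is the computation of $\det A$, since $A$ is not itself a weight matrix but rather $W'$ twisted by the wraparound factor $x^{d-2}$ on its strictly lower-triangular part, so the inductive hypothesis does not apply directly. I propose to handle this by strengthening the induction and proving in parallel the auxiliary identity
\[
\det\bigl(A(\cD',y)\bigr)\;=\;(-1)^{n-1}\,y\,\prod_{i=1}^{m-1}\,\sum_{j=0}^{d_i-2}\bigl(y\,c'\,x_i^2\bigr)^j
\]
for any dissection $\cD'$ and any formal parameter $y$, where $A(\cD',y)$ denotes $W_{\cD'}$ with its strictly lower-triangular entries rescaled by~$y$. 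The specialisation $y=1$ recovers the main determinant formula for $\cD'$, while $y=x^{d-2}$ produces the desired $\det A$. This stronger statement is amenable to the very same block-decomposition induction, carrying the parameter $y$ through the calculation; the rank-one reductions, the Laplace expansion, and the sign bookkeeping all go through verbatim. Combining this auxiliary formula with the $s_m/x^{d-2}$ factor from the Schur complement and the signs $(-1)^{n-1}$ and $(-1)^{d-2}$ then yields $\det W_\cD=(-1)^{n+d-3}\prod_{i=1}^m s_i$, completing the induction.
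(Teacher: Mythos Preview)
Your strategy is the same as the paper's: the naive induction fails because the upper-left block $A$ after cutting off a boundary piece is not itself a weight matrix, so one must strengthen the inductive hypothesis by an extra parameter. The paper does this by putting indeterminate weights $q_1,\ldots,q_n$ on \emph{all} boundary edges and proving the more general Theorem~\ref{thm:detandsmith}; your single parameter $y$ is exactly the specialization $q_1=\cdots=q_{n-1}=1$, $q_n=y$, and your auxiliary formula is the corresponding special case, with $\eps=y$.

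There is, however, a real gap in your claim that the auxiliary identity for $A(\cD',y)$ follows by ``the very same block-decomposition induction \ldots\ verbatim.'' Your block reduction, as set up, needs the boundary piece being removed to contain the wraparound edge $e_n$ (the edge carrying~$y$). But the piece of $\cD'$ incident with $e_n$ need not be a boundary piece: for example, dissect $P_6$ by the diagonals $\{1,3\}$ and $\{4,6\}$; the quadrangle $\{1,3,4,6\}$ containing $e_6$ has two interior diagonals, while the only boundary pieces are $\{1,2,3\}$ and $\{4,5,6\}$. If you cut off one of these instead, the resulting $I\times I$ block carries \emph{two} independent scaling factors --- the original $y$ on the old wraparound and $x_\beta^{d_\beta-2}$ on the new edge $\{a,b\}$ --- so it is no longer of the form $A(\cD'',y')$, and your one-parameter hypothesis does not apply. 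Relabelling does not rescue this directly, since cyclic shifts do not conjugate $A(\cD',y)$ into $A(\cD'_\sigma,y)$.

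The clean fix is precisely what the paper does: allow a weight on every edge. Then cutting off \emph{any} boundary piece simply collapses its run of edge weights into a single product on the new edge, and the induction closes; your $y$ becomes the product $\eps=\prod_i q_i$, and your formula is recovered by specialization. So your plan is right in spirit, but the one-parameter strengthening is too weak to carry the induction by itself.
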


From the formula above we see immediately
that the determinant only depends on the size of the pieces
in the dissection but it is independent from the way they are glued together.
It is indeed a multivariate polynomial that is invariant under permutation of indeterminates that correspond to pieces in the dissection of the same size.

\medskip

We will obtain the formula above as a special case of an even more general
determinant formula in Theorem~\ref{thm:detandsmith},
where we will also provide an equivalent diagonal form for the weight matrix.
Indeed, our proof strategy will force us to also put indeterminate
weights~$q_1, \ldots, q_n$ on the edges $e_1, \ldots, e_n$ of a polygon $P_n$,
with $q_i$ on the edge $e_i$ between $i$ and $i+1$ (taking the vertex number
modulo $n$).

Then a walk $w$ from $i$ to $j$ around the polygon $P_n$ does not only get its weight from the chosen polygons along the walk,
but we also record a contribution from
the edges  traversed,
i.e., the new weight of such a walk is defined to be the monomial
$$x^w q^w = x^w \prod_{s=i}^{j-1} q_s \:.$$
We then define the new weight matrix
$W_\cD(x;q)=W_\cD(x_1,\ldots,x_m;q_1,\ldots,q_n)$ associated to the dissection $\cD$ of the polygon $P_n$
to be the matrix with $(i,j)$-entry
$$
v_{i,j} =\sum_{w: \text{ walk from $i$ to $j$}} x^w q^w
\quad \text{ for } 1\leq i,j\leq n\:.$$
Clearly, with $W_\cD(x)=(w_{i,j})$ being our previous weight matrix,
the connection to the entries of the new weight matrix is given by
$$v_{i,j}= w_{i,j} \prod_{s=i}^{j-1} q_s \quad \text{ for } 1\leq i,j\leq n\:.$$
Note in particular that $v_{i,i+1}=q_i$, for all~$i$.

\begin{Definition}
Set $\Z[x;q]=\Z[x_1, \ldots, x_m;q_1, \ldots, q_n]$.
We now complement monomials in
$$\Pol_\cD(x;q)= \{f \in \Z[x;q] \mid
\deg_{x_i} f \leq d_i -2 \text{ for all } i,
\deg_{q_j} f \leq 1 \text{ for all } j\}$$
not only with respect to $c=\prod_{i=1}^m x_i^{d_i-2}$
in the indeterminates $x_i$ as before, but also with respect to
$\eps=\prod_{j=1}^n q_j$ in the indeterminates $q_j$, i.e.,
the contribution  $\prod_{j\in I} q_j$ is changed to
the $\eps$-complement
$\prod_{j\not\in I} q_j$, for $I\subseteq \{1,\ldots,n\}$.
Extending this linearly, we define a new complementing map
$$\psi_\cD: \Pol_\cD(x;q)\to \Pol_\cD(x;q)\:.$$
\end{Definition}

By Theorem~\ref{thm:complementarysymmetry} and the definition
of the new weights given above, it follows immediately that the weight matrix
$W_\cD(x;q)$ is complementary symmetric with respect to~$\psi_\cD$, i.e.,
we have

\begin{cor} \label{cor:complementarysymmetry}
Let $n\ge 3$, let $\cD$ be a dissection of
$P_n$ into $m$ pieces,
and let $W_\cD(x;q)=(v_{i,j})$ be the associated
weight matrix. Then
$$v_{j,i}=\psi_\cD(v_{i,j})   \quad
\text{for all } 1\leq i,j\leq n \:,$$
i.e.,
$W_\cD(x;q)$ is complementary symmetric with respect to $\psi_\cD$:
$$W_\cD^t=(\psi_\cD(v_{i,j}))_{i,j} \:.$$
\end{cor}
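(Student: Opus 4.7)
The plan is to reduce directly to Theorem~\ref{thm:complementarysymmetry}. The crucial observation is that the $q$-part of any counterclockwise walk from $i$ to $j$ is the \emph{same} squarefree monomial $\prod_{s=i}^{j-1} q_s$, depending only on the endpoints and not on the intermediate choices of pieces. Summing over walks therefore factors as $v_{i,j} = w_{i,j}\cdot \prod_{s=i}^{j-1} q_s$, which is already remarked on in the excerpt.

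Next I would unpack how $\psi_\cD$ acts on such a product. Since $\psi_\cD$ is defined on monomials and extended linearly, and since every monomial of $w_{i,j}$ lies in $\Z[x]$ while $\prod_{s=i}^{j-1} q_s$ lies in $\Z[q]$, the map separates cleanly:
\[
\psi_\cD\!\left(w_{i,j}\cdot \prod_{s=i}^{j-1} q_s\right) \;=\; \phi_\cD(w_{i,j}) \cdot \prod_{s\notin\{i,\ldots,j-1\}} q_s \;=\; \phi_\cD(w_{i,j})\cdot \prod_{s=j}^{i-1} q_s,
\]
where the $q$-factor uses that the $\eps$-complement of the set $\{i,i+1,\ldots,j-1\}$ (taken mod~$n$) is exactly $\{j,j+1,\ldots,i-1\}$, the set of edges traversed on a counterclockwise walk from $j$ to $i$.

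Applying Theorem~\ref{thm:complementarysymmetry} to replace $\phi_\cD(w_{i,j}) = \overline{w_{i,j}}$ by $w_{j,i}$, the right-hand side becomes $w_{j,i}\cdot \prod_{s=j}^{i-1} q_s = v_{j,i}$, establishing $\psi_\cD(v_{i,j}) = v_{j,i}$. A brief sanity check disposes of the degenerate cases: if $i=j$ both sides vanish; if $j=i+1$ then $v_{i,i+1}=q_i$ and $\psi_\cD(q_i) = \prod_{s\neq i} q_s = v_{i+1,i}$ (this is the unique walk from $i+1$ back to $i$ which, as in Remark~\ref{rem:specialweights}(ii), traverses every other edge exactly once).

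There is no genuine obstacle here; the corollary is essentially a formal consequence of Theorem~\ref{thm:complementarysymmetry} once one notices that the $q$-weight of a walk depends only on its endpoints. The only point requiring a touch of care is the cyclic indexing of the edge labels $q_1,\ldots,q_n$, ensuring that the complementary subset in the $q$-variables really does match the edges of the reverse walk.
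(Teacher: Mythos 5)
Your proof is correct and is precisely the argument the paper has in mind: the paper derives the corollary ``immediately'' from Theorem~\ref{thm:complementarysymmetry} via the factorization $v_{i,j}=w_{i,j}\prod_{s=i}^{j-1}q_s$ and the fact that $\psi_\cD$ acts separately as $\phi_\cD$ on the $x$-part and as $\eps$-complementation on the $q$-part, which is exactly what you spell out. One cosmetic slip in your sanity check: $\psi_\cD(q_i)=c\prod_{s\neq i}q_s$ (the $x$-part of the monomial $q_i$ is $1$ and complements to $c=\phi_\cD(1)$), which still equals $v_{i+1,i}=w_{i+1,i}\prod_{s\neq i}q_s$ by Remark~\ref{rem:specialweights}(ii), so the conclusion is unaffected.
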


\medskip

For the new weight matrices $W_\cD (x;q)$
we now give an explicit formula for the determinant,
and we also find an equivalent diagonal form.

\begin{Theorem}\label{thm:detandsmith}
Let $\cD=\{\al_1, \ldots,\al_m\}$ be a dissection of $P_n$
of type $\{d_1, \ldots, d_m\}$,
with associated indeterminates $x_1,\ldots,x_m$, respectively,
and indeterminate weights $q_1, \ldots,q_n$ on the edges $e_1, \ldots,e_n$.
Let $R=\Z[x_1^\pm,\ldots,x_m^\pm;q_1^\pm,\ldots,q_n^\pm]$
be the ring of Laurent polynomials in these indeterminates.
Set $c=\prod_{i=1}^m x_i^{d_i-2}$
and $\eps=\prod_{i=1}^n q_i$.

Let $W_\cD (x;q)=W_\cD (x_1,\ldots,x_m;q_1,\ldots,q_n)$ be the  weight matrix corresponding to~$\cD$.
Then
$$
\det W_\cD(x;q)
= (-1)^{n-1} \eps \prod_{i=1}^m \sum_{j=0}^{d_i-2} (\eps c x_i^2)^j
= (-1)^{n-1} \eps \prod_{i=1}^m \frac{ (\eps c x_i^2)^{d_i-1}-1}{\eps c x_i^2-1}\:.
$$
Furthermore, there are matrices $P,Q \in \GL(n,R)$ such that
$$P\cdot W_\cD (x,q) \cdot Q = \Delta(\sum_{j=0}^{d_1-2} (\eps c x_1^2)^j, \ldots, \sum_{j=0}^{d_m-2} (\eps c x_m^2)^j, 1, \ldots , 1)\:.$$
\end{Theorem}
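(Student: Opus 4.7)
The plan is to prove both the determinant formula and the diagonal form simultaneously by induction on the number $m$ of pieces in~$\cD$. In the base case $m=1$, the dissection consists of a single $n$-gon, so every walk is uniquely determined and one has $v_{i,j}=x_1^{(j-i-1)\bmod n}\prod_{s=i}^{j-1}q_s$ (indices taken mod~$n$). Sequential pivoting on the subdiagonal entries $v_{i,i+1}=q_i$, each a unit in~$R$, reduces $W_\cD$ to the diagonal form $\Delta(f_1,1,\ldots,1)$ with $f_1=\sum_{j=0}^{n-2}(\eps x_1^n)^j$, which simultaneously yields the determinant formula.

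For the inductive step I would choose a boundary $d$-gon $\al=\al_m$ of~$\cD$, glued along the diagonal between vertices~$1$ and~$n$ of the smaller polygon with dissection $\cD':=\cD\setminus\{\al\}$. The key device is the Laurent substitution
\[q'_n:=q_nq_{n+1}\cdots q_{n+d-2}\cdot x_m^{d-2},\qquad q'_k:=q_k\text{ for }k<n,\]
which satisfies $\eps'c'=\eps c$ and hence makes the inductive factors $\sum_{j=0}^{d_i-2}(\eps'c'x_i^2)^j$ agree with the target~$f_i$ for $i<m$. Case~1 of the proof of Theorem~\ref{thm:complementarysymmetry} shows that the top-left $n\times n$ block of $W_\cD(x;q)$ equals $W_{\cD'}(x';q')$ under this substitution. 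Cases~2 and~3 yield the linear relations
\[v_{i,n+k'}=\alpha_{k'}v'_{i,n}+\beta_{k'}v'_{i,1},\qquad v_{n+l',j}=\gamma_{l'}v'_{1,j}+\delta_{l'}v'_{n,j}\]
for $i,j\in\{1,\ldots,n\}$ and $k',l'\in\{1,\ldots,d-2\}$, with explicit Laurent-monomial coefficients $\alpha_{k'},\beta_{k'},\gamma_{l'},\delta_{l'}\in R^\times$ (for instance $\alpha_{k'}=x_m^{k'}q_n\cdots q_{n+k'-1}$). The column operations $C_{n+k'}\to C_{n+k'}-\alpha_{k'}C_n-\beta_{k'}C_1$ together with the analogous row operations then clear both off-diagonal blocks and decouple $W_\cD$ into $W_{\cD'}(x';q')\oplus B$ for some explicit $(d-2)\times(d-2)$ residual block~$B$.

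It remains to diagonalize~$B$. After conjugation by suitable diagonal unit matrices, $B$ becomes (up to the unit scalar $-x_m^{-1}$) the $(d-2)\times(d-2)$ Toeplitz-type matrix $M$ with entries $y$ above the diagonal, $1+y$ on the diagonal, and $1$ below, where $y:=\eps c x_m^2$. I would then perform the column operations $C_j\to C_j-C_{j-1}$ from right to left, producing $-1$'s on the superdiagonal and accumulating $1+y+y^2+\cdots+y^{d-2}=f_m$ in a single off-diagonal position of the first column; successive pivoting on these $-1$'s finally reduces $B$ to $\Delta(f_m,1,\ldots,1)$. Combined with the inductive hypothesis applied to $W_{\cD'}(x';q')$, this yields the target diagonal form $\Delta(f_1,\ldots,f_m,1,\ldots,1)$ up to permutation of diagonal entries, and the determinant formula follows by multiplicativity. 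The main obstacle will be the explicit computation of~$B$ after the row and column operations, in particular the careful tracking of the $x_m^{-1}$-factors and cancellations needed for the residual block to assume the promised Toeplitz form.
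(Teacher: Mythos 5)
Your proposal follows essentially the same route as the paper: induction on the number of pieces, with the trivial dissection as base case; cutting off a boundary $d$-gon and using the decomposition of walks at the cut diagonal to block-diagonalize $W_\cD$ into the weight matrix of the smaller dissection (with the absorbed edge weight $q'_n=q_n\cdots q_{n+d-2}x_m^{d-2}$ so that $\eps'c'=\eps c$) plus a $(d-2)\times(d-2)$ Toeplitz-type residual block with determinant $\sum_{j=0}^{d-2}(\eps c x_m^2)^j$. The only differences are cosmetic (the paper isolates the residual-block computation as a separate proposition with unit parameters $u_i=q_{n+i}x_m$ rather than conjugating to pure Toeplitz form, and it tracks the sign and $\eps$-factor explicitly at the end), so the plan is sound.
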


We will prove this formula
by induction on the number of pieces of the dissection.
The transformations giving the matrices $P,Q$
will be constructed quite explicitly.

For the start of the induction proof
we first need to consider the weight matrix
for a polygon $P_d$ without any dissecting diagonals
but with edges weighted by indeterminates.
The following result is exactly the determinant formula
expected in the situation where we have
a trivial dissection with just the original polygon.

\begin{Proposition}\label{prop:polygondet}
Let $d\geq 3$,  $P_d$ a convex $d$-gon,
and let its edges be weighted by $q_1, \ldots , q_d$
as before; set $\eps=\prod_{i=1}^d q_i$.
Let $W_d(x;q)=W_d(x;q_1,\ldots,q_d)$
be the  weight matrix of~$P_d$ for the trivial
dissection $\cD=\{P_d\}$.
Then
$$
\det W_d(x;q)
= (-1)^{d-1} \sum_{j=0}^{d-2}  x^{dj} \prod_{i=1}^d q_i^{j+1}
= (-1)^{d-1} \eps \sum_{j=0}^{d-2}  (\eps x^d)^j \:.
$$
More precisely, with $R=\Z[x^{\pm};q_j^{\pm},j=1,\ldots, n]$
being the ring of Laurent polynomials,
we find $P\in \GL(d,R)$ with $\det P=(-1)^{d-1}$ and $Q\in \SL(d,R)$
such that
$$P \cdot W_d(x;q) \cdot Q=
\Delta(q_1,\ldots,q_{d-1},q_d\sum_{j=0}^{d-2}  (\eps x^d)^j)\:.$$
Thus we have $U,V\in \GL(d,R)$ with
$$U\cdot W_d(x;q) \cdot V = \Delta(\sum_{j=0}^{d-2}  (\eps x^d)^j,1,\ldots,1)\:.$$
\end{Proposition}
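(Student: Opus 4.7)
The plan is to construct $P$ and $Q$ through an explicit sequence of elementary row and column operations over the Laurent ring $R$ that reduces $W_d(x;q)$ to the claimed diagonal matrix; the determinant formula then follows from the product of the diagonal entries. Since the trivial dissection has only one piece, the walk from $i$ to $j$ (for $i\ne j$) is forced to use $P_d$ at every intermediate vertex, so $v_{i,j} = x^{(j-i-1)\bmod d}\prod_{s=i}^{j-1} q_s$ (cyclic indices) and $v_{i,i}=0$. The first move is to right-multiply by the unit upper-bidiagonal matrix $N\in\SL(d,R)$ with $N_{j-1,j} = -xq_{j-1}$ for $j=2,\ldots,d$. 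Using the multiplicative identity $v_{i,j} = xq_{j-1}v_{i,j-1}$, valid precisely when $i\notin\{j-1,j\}$ (i.e.\ in the non-wrap-around case), one checks that for $j\ge 2$ column $j$ of $W_d N$ collapses to only two nonzero entries, namely $q_{j-1}$ in row $j-1$ and $-x^{d-1}\eps$ in row $j$, while column $1$ is unchanged.

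Next, cyclically permute columns by $\Pi$ (shifting column $1$ to the last position, contributing the sign $(-1)^{d-1}$) to obtain $\tilde A:=W_d N\Pi$: upper bidiagonal in its first $d-1$ columns with diagonal $q_1,\ldots,q_{d-1}$ and sub-diagonal $-x^{d-1}\eps$, plus an arrow last column whose row-$i$ entry ($2\le i\le d$) is $x^{d-i}q_i\cdots q_d$. For $i=2,3,\ldots,d$, add $(x^{d-1}\eps/q_{i-1})$ times row $i-1$ to row $i$ to annihilate the sub-diagonal entry. The diagonal entries in columns $1,\ldots,d-1$ are preserved, and a short induction on $i$, driven by the identity $1+x^d\eps\cdot\sum_{k=0}^{i-2}(\eps x^d)^k = \sum_{k=0}^{i-1}(\eps x^d)^k$, shows that the row-$i$ entry in the last column becomes $x^{d-i}q_i\cdots q_d\sum_{k=0}^{i-2}(\eps x^d)^k$ for $2\le i\le d-1$, and finally that the $(d,d)$-entry becomes $q_d\sum_{k=0}^{d-2}(\eps x^d)^k$. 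The matrix is now upper triangular; to clear the remaining above-diagonal entries in column $d$, subtract suitable $R$-multiples of columns $2,\ldots,d-1$, each with unique nonzero entry the unit $q_i$, from column $d$. The result is exactly $\Delta(q_1,\ldots,q_{d-1},q_d\sum_{j=0}^{d-2}(\eps x^d)^j)$.

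Assembling these operations yields $P'W_d(x;q)Q' = \Delta(q_1,\ldots,q_{d-1},q_d S)$ with $S=\sum_{j=0}^{d-2}(\eps x^d)^j$, where $P'\in\SL(d,R)$ comes from the row operations and $Q'\in\GL(d,R)$ satisfies $\det Q'=(-1)^{d-1}$ (the sign coming entirely from $\Pi$). Since the diagonal matrix commutes with $J:=\Delta((-1)^{d-1},1,\ldots,1)$, setting $P:=JP'$ and $Q:=Q'J^{-1}$ transfers the sign to $P$ without altering the diagonal form, yielding $\det P=(-1)^{d-1}$ and $\det Q=1$; the determinant formula then follows by multiplying the diagonal entries. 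To obtain the normalized form $UW_d V = \Delta(S,1,\ldots,1)$, multiply $P$ on the left by $\Delta(1,\ldots,1,q_d^{-1})$ and $Q$ on the right by $\Delta(q_1^{-1},\ldots,q_{d-1}^{-1},1)$ (all units in $R$), then permute rows and columns to move the unique non-unit entry to the $(1,1)$-position. The main technical obstacle will be the inductive bookkeeping that verifies the geometric-series build-up in the row-reduction step; the remaining moves are routine, and the use of the Laurent ring is essential so that division by the $q_i$ stays inside $R$.
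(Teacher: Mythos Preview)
Your proof is correct and follows essentially the same route as the paper: the identical first column subtraction producing the bidiagonal structure with diagonal $q_{j-1}$ and sub-diagonal $-\eps x^{d-1}$, followed by elementary operations that build the geometric series $\sum_{j=0}^{d-2}(\eps x^d)^j$ in one corner entry, and a cyclic column permutation supplying the sign $(-1)^{d-1}$. The only differences are cosmetic: the paper first clears the first column via column operations (building the sum at the bottom of column~1), then clears the sub-diagonal by row operations, and permutes last; you permute immediately after the first step and then let the row operations do double duty (clearing the sub-diagonal while simultaneously building the sum in the last column), finishing with column sweeps. Your extra device of conjugating by $J=\Delta((-1)^{d-1},1,\ldots,1)$ to migrate the sign from $Q$ to $P$ is a clean way to match the precise statement $\det P=(-1)^{d-1}$, $Q\in\SL(d,R)$; the paper is slightly looser on this point.
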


\begin{proof}
The matrix $W_d(x;q)$ has the following form
$$
\begin{pmatrix}
0 & q_1 & q_1q_2x & q_1q_2q_3x^2 &  \cdots && q_1\cdots q_{d-1}x^{d-2}\\
q_2\cdots q_d x^{d-2} & 0 & q_2 & q_2q_3x & \cdots && \vdots \\
q_3\cdots q_d x^{d-3} & q_3\cdots q_dq_1 x^{d-2} & 0 & q_3\\
\vdots & \vdots &&  &  & \ddots & \vdots  \\
q_{d-1}q_d x & q_{d-1}q_dq_1 x^2& & \cdots & & & q_{d-1}\\
q_d & q_dq_1x & & \cdots & & q_dq_1\cdots q_{d-2}x^{d-2} & 0
\end{pmatrix}
$$
In a first step, working from right to left, we subtract $q_{i-1}x$ times
column $i-1$ from column~$i$, for $i=d,d-1,\ldots,2$.
The transformed matrix is then
$$
\begin{pmatrix}
0 & q_1 & 0 & 0 & \cdots  &  0\\
q_2\cdots q_d x^{d-2} & -\eps x^{d-1} & q_2 & 0 & \cdots & 0 \\
q_3\cdots q_d x^{d-3} & 0 & -\eps x^{d-1} & q_3\\
\vdots & & & \ddots & \ddots & \vdots\\
q_{d-1}q_d x & 0 & & &  -\eps x^{d-1} & q_{d-1}\\
q_d & 0 &  & \cdots  &  0 & -\eps x^{d-1}
\end{pmatrix}
$$
Using columns~$3, \ldots , d$ in succession to transform all entries
in the first column except the last one to zero,
we finally reach a situation where the only non-zero
entry in the first column is $q_d \sum_{j=0}^{d-2} (\eps x^d)^j$,
at the bottom.
We now use  rows~$1,\ldots,d-1$ in succession to transform the
entries just below each $q_i$ to zero, for $i=1,\ldots, d-1$.
Moving the first column to the end via a permutation matrix $P$,
of determinant $(-1)^{d-1}$, we obtain the diagonal matrix
$$\Delta(q_1,\ldots,q_{d-1},q_d\sum_{j=0}^{d-2}  (\eps x^d)^j)$$
as claimed.
In particular, since all transformations apart from
the last permutation were unimodular,
we have also proved the stated determinant formula.
\end{proof}

There are a couple of consequences which we want to state explicitly.
With the result above we have computed the determinants
of special Toeplitz matrices which arise in our context
as weight matrices
of special edge-weighted polygons.

\begin{Corollary}
Let $d\geq 3$, $m\in \{0, \ldots,d-2\}$.
Let $T_d^{d-m}(x,q)$ be the $(d-m)\times (d-m)$
Toeplitz matrix with entries
$$
t_{i,j}=
\begin{cases}
x^{j-i} & \text{if } j>i\\
0 & \text{if } i=j\\
qx^{d-1-(i-j)} & \text{if } i>j
\end{cases}
$$
for $i,j\in \{1,\ldots,d-m\}$.
Then
$$
\det T_d^{d-m}(x,q) = (-1)^{d-m-1} q x^m \sum_{j=0}^{d-2-m} (qx^d)^j
\:.
$$
\end{Corollary}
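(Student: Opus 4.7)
The plan is to mimic the two-stage column reduction used in the proof of Proposition~\ref{prop:polygondet}, to which $T_d^{d-m}(x,q)$ is very closely related. Indeed, $T_d^{d-m}$ has the same banded Toeplitz structure as the weight matrix of a trivially dissected $d$-gon under the specialization $q_1=\cdots=q_{d-1}=1$, $q_d=q$; up to indexing conventions it is the leading $(d-m)\times(d-m)$ principal block of that weight matrix, a block for which Proposition~\ref{prop:polygondet} gives no direct answer but whose determinant should fall out of the same manipulations.

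First I would run the right-to-left column sweep: for $j=d-m, d-m-1, \ldots, 2$, replace column~$j$ by column~$j$ minus $x$ times column~$j-1$. Because the Toeplitz entries satisfy $t_{i,j}=x\,t_{i,j-1}$ whenever $j$ and $j-1$ lie strictly on the same side of the diagonal relative to $i$, this sweep cancels all entries outside a narrow bidiagonal strip: column~$1$ is untouched (its row-$k$ entry is $qx^{d-k}$ for $k\ge 2$), while each column $j\ge 2$ is reduced to a single super-diagonal entry at $(j-1,j)$ together with a single $-qx^{d-1}$ at $(j,j)$. I would then clear column~$1$ from the top downward: the super-diagonal entry at $(k,k+1)$ lets me eliminate the row-$k$ entry of column~$1$ by subtracting an appropriate multiple of column~$k+1$, and each such step propagates a $qx^{d-1}$-weighted contribution into row $k+1$ of column~$1$. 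Writing $b_k$ for the entry at row $k$ of column~$1$ at the moment it is cleared (with $b_2=qx^{d-2}$), one obtains a short first-order linear recurrence whose solution unfolds into the geometric sum
\[
b_{d-m} \;=\; qx^m\sum_{j=0}^{d-m-2}(qx^d)^j,
\]
exactly the non-trivial factor appearing in the corollary.

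To finish, a cyclic permutation moving column~$1$ to the last slot (a single cycle of length $d-m$, contributing a sign $(-1)^{d-m-1}$) turns the matrix into a lower triangular form that concentrates the entry $b_{d-m}$ in the last diagonal position; the determinant is then read off at once from the product of the diagonal, and multiplying by the cycle sign yields the stated formula. The genuinely delicate point is the induction in the clearing step: one has to verify carefully that the local updates thread into a clean geometric series with no spurious cross-terms, and that the exponent bookkeeping on both $q$ and $x$ matches the closed form. Once the recurrence for $b_k$ is written down, however, the remainder of the calculation is routine and structurally identical to the argument in Proposition~\ref{prop:polygondet}.
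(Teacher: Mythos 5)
Your overall strategy---rerunning the elimination from Proposition~\ref{prop:polygondet} on this particular Toeplitz matrix---is a genuinely different route from the paper's, whose proof is a one-liner: it identifies $T_d^{d-m}(x,q)$ with the weight matrix $W_{d-m}(x;1,\ldots,1,qx^m)$ of a trivially dissected $(d-m)$-gon (piece weight $x$, all edge weights $1$ except the last, which is $qx^m$) and reads the determinant off the already-proved proposition. However, your execution breaks down exactly at the point you yourself flag as delicate. With the entries as printed ($t_{i,j}=x^{j-i}$ for $j>i$), the surviving super-diagonal entry of column $j$ after your sweep is $t_{j-1,j}-x\,t_{j-1,j-1}=x$, not a unit equal to $1$; clearing the entry $b_k$ in column $1$ therefore requires subtracting $(b_k/x)$ times column $k+1$, which feeds $b_k\,qx^{d-2}$ (not $b_k\,qx^{d-1}$) into row $k+1$. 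The recurrence is $b_{k+1}=qx^{d-k-1}+qx^{d-2}b_k$ with $b_2=qx^{d-2}$, whose solution is $b_{d-m}=qx^{m}\sum_{j=0}^{d-m-2}(qx^{d-1})^{j}$ --- ratio $qx^{d-1}$, not $qx^{d}$ --- and the final triangular matrix has diagonal $x,\ldots,x,b_{d-m}$, so the determinant also carries an extra factor $x^{d-m-1}$ that you drop. A sanity check with $d=3$, $m=0$ confirms this: the matrix with rows $(0,\,x,\,x^{2})$, $(qx,\,0,\,x)$, $(q,\,qx,\,0)$ has determinant $qx^{2}+q^{2}x^{4}$, whereas the stated formula gives $q+q^{2}x^{3}$. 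So the assertion that the recurrence ``unfolds into'' the displayed geometric sum is false for the matrix as defined, and the proof as written does not establish the statement.

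What your (corrected) computation actually reveals is that the printed entries are inconsistent with the printed determinant: the version that matches both the formula and the paper's identification with $W_{d-m}(x;1,\ldots,1,qx^m)$ has $x^{j-i-1}$ above the diagonal, so that $t_{i,i+1}=1$. With that correction your argument goes through cleanly: the super-diagonal entries become $1$, the recurrence is $b_{k+1}=qx^{d-k-1}+qx^{d-1}b_k$, which does unfold into $qx^{m}\sum_{j=0}^{d-m-2}(qx^{d})^{j}$, and the remaining diagonal entries are all $1$, giving exactly $(-1)^{d-m-1}qx^{m}\sum_{j=0}^{d-m-2}(qx^{d})^{j}$. Your passing remark that $T_d^{d-m}$ is ``up to indexing conventions'' the leading principal block of the $d$-gon's weight matrix glosses over precisely this factor of $x$ per super-diagonal step, which is where the error hides. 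Either prove the corrected statement and say so explicitly, or, better, do what the paper does: verify the entry-by-entry identification with the $(d-m)$-gon weight matrix and quote Proposition~\ref{prop:polygondet}, which spares you from redoing the elimination at all.
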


\proof
We have
$$T_d^{d-m}(x,q)=W_d(x;1,\ldots,1,qx^m)\,
$$
i.e., this is the weight matrix for the walks around a
$(d-m)$-gon of weight~$x$, where
the edge weights  are specialized to~1, except for the edge joining $d-m$ and $1$ which is set to~$qx^m$.
Now the result follows immediately from the determinant formula
for the weight matrix.
\qed

\smallskip

In particular, for $m=0$ we have a well-known result on special circulant matrices, obtained here over an arbitrary field and without considering  eigenvalues:
\begin{Corollary}
Let $P_d$ be a convex $d$-gon with
associated indeterminate $x$,
indeterminate weight $q$ on the edge~$e_1$,
and weight~1 on all other edges.
Then
$$\det W_d(x;q,1,\ldots,1)=
(-1)^{d-1}\sum_{j=0}^{d-2} q^{j+1}x^{dj}
\:.$$
In particular,
$$\det W_d(x)= (-1)^{d-1}\sum_{j=0}^{d-2} x^{dj}
\:.$$
\end{Corollary}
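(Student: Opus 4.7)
The plan is to deduce this corollary as an immediate specialization of Proposition~\ref{prop:polygondet}. First I set $q_1 = q$ and $q_2 = \cdots = q_d = 1$ in the general formula there, so that $\eps = \prod_{i=1}^d q_i = q$. Substituting these values into
$$\det W_d(x;q_1,\ldots,q_d) = (-1)^{d-1} \eps \sum_{j=0}^{d-2} (\eps x^d)^j$$
and collecting powers of $q$ yields $(-1)^{d-1} q \sum_{j=0}^{d-2} q^j x^{dj} = (-1)^{d-1} \sum_{j=0}^{d-2} q^{j+1} x^{dj}$, which is exactly the first claimed formula.

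For the ``in particular'' statement I then further specialize $q=1$. The weight $x^w q^w$ of every walk then reduces to $x^w$, so the resulting matrix is precisely the weight matrix $W_d(x)$ of Definition~\ref{def:weights}, and the formula collapses to $(-1)^{d-1}\sum_{j=0}^{d-2} x^{dj}$ as asserted.

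An alternative route is via the preceding Toeplitz corollary with $m=0$, which gives $\det T_d^d(x,q) = (-1)^{d-1} \sum_{j=0}^{d-2} q^{j+1} x^{dj}$ for the matrix $W_d(x;1,\ldots,1,q)$, where the distinguished edge weight $q$ sits between the vertices $d$ and $1$ rather than between $1$ and $2$. Passing between these two conventions requires only the observation that a cyclic relabeling of the vertices amounts to a simultaneous permutation of rows and columns of the weight matrix, and hence leaves the determinant unchanged.

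There is essentially no obstacle to carry out: the substantive content has already been absorbed into Proposition~\ref{prop:polygondet}, and what remains is routine substitution. The only place that would demand even mild care is the cyclic-symmetry remark when routing the argument through the Toeplitz corollary; routing it directly through Proposition~\ref{prop:polygondet} sidesteps even that.
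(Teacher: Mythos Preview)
Your proposal is correct. The paper itself offers no explicit proof beyond the introductory phrase ``In particular, for $m=0$'', indicating that its intended route is precisely your alternative one via the Toeplitz corollary; your primary route---specializing Proposition~\ref{prop:polygondet} directly by setting $q_1=q$ and $q_2=\cdots=q_d=1$ so that $\eps=q$---is equally valid and in fact slightly cleaner, since it avoids the (admittedly trivial) cyclic-relabeling step altogether.
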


\medskip

Before we move on to the proof of Theorem~\ref{thm:detandsmith}, we provide a
matrix result that will be useful.

\begin{Proposition}\label{prop:matrix}
Let $R$ be a commutative ring (with~1), $s\in \N$.
Let $y\in R$, and let $u_1, \ldots, u_{s-1}$ be units in~$R$;
set $\delta = \prod_{i=1}^{s-1}u_i$.
Define the $d\times d$ matrix $U$ over $R$ by
$$
U=
\begin{pmatrix}
1+y & u_1y & u_1u_2y & \cdots && u_1\cdots u_{s-1}y\\
u_1^{-1} & 1+y & u_2y & u_2u_3y && u_2\cdots u_{s-1}y\\
(u_1u_2)^{-1} & u_2^{-1} & 1+y &&& u_3\cdots u_{s-1}y\\
\vdots &&& \ddots \\
&&&& \ddots & u_{s-1}y\\
(u_1 \cdots u_{s-1})^{-1} & (u_2\cdots u_{s-1})^{-1} && (u_{s-2}u_{s-1})^{-1}& u_{s-1}^{-1} & 1+y
\end{pmatrix}\:.
$$
Then there are
$P\in \GL(s,R)$ with $\det P=(-1)^{s-1}$ and $Q\in \SL(s,R)$
such that
$$P \cdot U \cdot Q=
\Delta(-u_1,\ldots,-u_{s-1},\delta^{-1}\sum_{j=0}^{s} y^j)\:,$$
and in particular,
$$\det U = \sum_{j=0}^{s} y^j\:.$$
\end{Proposition}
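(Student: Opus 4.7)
I would mirror the matrix reduction used for Proposition~\ref{prop:polygondet}, since $U$ has essentially the same block shape (a diagonal $1+y$ plus an upper part scaled by $y$ and weighted by products of the $u_i$'s, plus a lower part that is the inverse scaling). The whole argument takes place over $R=\Z[y^{\pm 1},u_1^{\pm 1},\ldots,u_{s-1}^{\pm 1}]$, and every elementary row/column operation below is defined there since the $u_i$ are units.

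The first move is, working right to left, to subtract $u_{i-1}\,C_{i-1}$ from $C_i$ for $i=s,s-1,\ldots,2$. A one-line calculation shows that $C_1$ is left untouched, while every other column collapses to exactly two nonzero entries: $-u_{i-1}$ in row $i-1$ and $y$ in row $i$. I would then clear rows $1,\ldots,s-1$ of $C_1$ by the cascade
\[
C_1\;\longleftarrow\;C_1 + (u_1\cdots u_k)^{-1}\bigl(1+y+\cdots+y^{k}\bigr)\,C_{k+1},\qquad k=1,2,\ldots,s-1.
\]
A short induction on $k$ verifies that the value trickling into row $k+1$ of $C_1$ at the end of step $k$ is precisely $(u_1\cdots u_k)^{-1}(1+y+\cdots+y^{k+1})$, so after the final step the only surviving entry of $C_1$ is $\delta^{-1}\sum_{j=0}^{s}y^{j}$ in row $s$.

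To finish, I would clear the remaining $y$'s by the row operations $R_r\leftarrow R_r+y\,u_{r-1}^{-1}\,R_{r-1}$ for $r=2,\ldots,s$; because row $r-1$ has already been reduced at each stage, the cancellation is immediate, and row $r$ is left with only $-u_r$ in column $r+1$ (or, for $r=s$, only $\delta^{-1}\sum_{j=0}^{s}y^{j}$ in column $1$). A single cyclic permutation sending $C_1$ to the last position---of sign $(-1)^{s-1}$---then produces the diagonal matrix $\Delta(-u_1,\ldots,-u_{s-1},\delta^{-1}\sum_{j=0}^{s}y^{j})$. Packaging the elementary row operations on the left and the elementary column operations together with the terminal permutation on the right gives matrices $P,Q\in\GL(s,R)$ with the sign of the permutation accounting for the required $\det P=(-1)^{s-1}$ (all other elementary operations being unimodular, so $Q\in\SL(s,R)$). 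Reading off the determinant of both sides yields $\det U=\sum_{j=0}^{s}y^{j}$ at once.

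\textbf{Main obstacle.} The whole argument is driven by Step~2: spotting the correct multiplier $(u_1\cdots u_k)^{-1}(1+y+\cdots+y^{k})$ at each stage and verifying that the recursion
\[
(u_1\cdots u_k)^{-1}+ (u_1\cdots u_k)^{-1}\bigl(1+y+\cdots+y^{k}\bigr)\cdot y \;=\; (u_1\cdots u_k)^{-1}\bigl(1+y+\cdots+y^{k+1}\bigr)
\]
keeps extending the truncated geometric series by exactly one term per step. Once this identity is in hand, everything else is bookkeeping: matching each operation to $P$ or $Q$ and accounting for the sign of the terminal cyclic permutation.
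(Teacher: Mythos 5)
Your reduction is exactly the one in the paper: the right-to-left column operations $C_i\leftarrow C_i-u_{i-1}C_{i-1}$, clearing the first column with the already-collapsed columns, killing the remaining $y$'s with the unit entries $-u_{i}$, and a final cyclic column permutation of sign $(-1)^{s-1}$; you merely make explicit the multipliers $(u_1\cdots u_k)^{-1}(1+y+\cdots+y^k)$ and the geometric-series recursion that the paper only describes verbally. One cosmetic remark: there is no need to pass to $\Z[y^{\pm1},u_i^{\pm1}]$ — the statement is over an arbitrary commutative ring with $y$ not assumed invertible, and indeed none of your operations ever uses $y^{-1}$, so the argument runs verbatim in the given ring $R$.
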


\begin{proof}
We use similar transformations as in the proof of Proposition~\ref{prop:polygondet}.
For $i=s-1,s-2,\ldots,2$, we multiply
column~$i-1$ by $u_{i-1}$ and subtract this from column~$i$;
then the new column~$i$ has at most two non-zero entries, namely $-u_{i-1}$ and $y$, in rows $i-1$ and $i$.
We then use all these columns in turn to transform the entries in the first column to zero, except the last one, which is transformed to
$\delta^{-1}\sum_{j=0}^s y^j$.
The unit entries $-u_i$ can then be used to turn all entries $y$ just below them to zeros. Moving the first column to the final column then
produces the asserted diagonal matrix, via a permutation matrix of determinant $(-1)^{s-1}$. All other transformations were unimodular, hence the claim about
the determinant also follows.
\end{proof}

\bigskip

We can now embark on the {\bf proof of Theorem~\ref{thm:detandsmith}}.

As we have proved the result in the case of an arbitrary polygon with trivial
dissection, i.e., a dissection with only one piece,
we can now assume that we have a dissection $\cD$ of a polygon $\cP$
with $m>1$ pieces,
and that the result holds for dissections with fewer than $m$ pieces.

As before, we let $\al=\al_m$ be a boundary $d$-gon of the dissection
$\cD$ of $\cP=P_{n+d-2}$ (say), with its interior edge a diagonal between
the vertices $1$ and~$n$,
so that $\al$ has vertices $1,n,n+1,\ldots, n+d-2$, as pictured
in the proof of Theorem~\ref{thm:complementarysymmetry}.

First, we transform the weight matrix $W_\cD(x;q)=W=(v_{i,j})$ into a
block diagonal form as follows.

For $1\le i \le n$ and $n+1\le j \le n+d-2$, sorting the
walks $w=(p_{i+1},\ldots ,p_{j-1})$ from $i$ to $j$
according to whether $p_n=\al$ or not, we have
$$
v_{i,j}=v_{i,n} \cdot q_n\cdots q_{j-1}x_m^{j-n}
+v_{i,1} \cdot (q_j\cdots q_{n+d-2}x_m^{n+d-1-j})^{-1}\:.
$$
Similarly, due to the complementary symmetry, we have
$$
v_{j,i}=v_{1,i} \cdot q_j\cdots q_{n+d-2}x_m^{n+d-1-j}
+v_{n,i} \cdot (q_n\cdots q_{j-1}x_m^{j-n})^{-1}
\:.
$$
Now subtract $(q_j\cdots q_{n+d-2}x_m^{n+d-1-j})^{-1}$ times column~1
plus $q_n\cdots q_{j-1}x_m^{j-n}$ times column~$n$ from column~$j$,
for $n+1\le j \le n+d-2$.
By the equation for $v_{i,j}$ above, this produces the zero matrix in the upper right
$n\times (d-2)$ block of~$W$;
analogous row operations lead to a zero matrix also
in the lower left $(d-2)\times n$ block of~$W$.
Hence we have transformed $W$ into the block diagonal sum of
the upper left $n\times n$-part $\tilde W$ of our weight matrix $W$,
and a $(d-2)\times (d-2)$ matrix $W'$, i.e., to the form
$
\begin{pmatrix}
 \tilde W & \bf{0} \\
\bf{0}  & W'
\end{pmatrix}
$,
with $\tilde W=(v_{i,j})_{1\le i,j\le n}$ and zero matrices of suitable size.

Considering the matrix $\tilde W$,
one observes that this is the weight matrix $W_{\tilde\cD}(x;\tilde q)$
for the dissection of an $n$-gon $P_n$
which arises from the given dissection $\cD$ of the polygon $P_{n+d-2}$
in the following way:
we  delete the vertices $n+1,\ldots,n+d-2$ of $\cP$,
we give the edge $\tilde e_n$ from vertex $n$ to $1$ in the $n$-gon $P_n$
on the vertices $1,\ldots,n$
the weight $\tilde q_n = q_n\cdots q_{n+d-2}x_m^{d_m-2}$,
leave all other edge weights unchanged,
and we obtain the dissection $\tilde\cD$ of $P_n$ by removing
$\al$ from~$\cD$.
Thus, by induction we know that the result holds for $\tilde W$.
We note that the weight $\tilde q_n$ just fits to give
for the parameters $\tilde\eps$, $\tilde c$
of the dissection $\tilde \cD$:
$$
\tilde \eps \tilde c = \prod_{i=1}^n \tilde q_i \prod_{k=1}^{m-1} x_k^{d_k-2}
 =\prod_{i=1}^{n+d-2} q_i \prod_{k=1}^{m} x_k^{d_k-2} = \eps c\:.
$$
Hence with suitable matrices
$\tilde P, \tilde Q \in \GL(n,R)$ we have
$$
\tilde W' =
\tilde P \cdot \tilde W \cdot \tilde Q =
\Delta(\sum_{j=0}^{d_1-2} (\eps c x_1^2)^j, \ldots, \sum_{j=0}^{d_{m-1}-2} (\eps c x_{m-1}^2)^j, 1, \ldots , 1)\:,$$
and
$$\det \tilde W=
(-1)^{n-1} \tilde \eps \prod_{i=1}^{m-1} \sum_{j=0}^{d_i-2} (\eps c x_i^2)^j
\:.$$

Now we turn to the $(d-2)\times (d-2)$ matrix~$W'=(v_{i,j}')_{n+1\le i,j\le n+d-2}$ that appeared
in the lower right block after the transformation of $W$ into block diagonal form.
For the entries of this matrix we have
(due to the transformations on the columns)
$$
v_{i,j}' = v_{i,j}- (q_j\cdots q_{n+d-2}x_m^{n+d-1-j})^{-1} v_{i,1}
- q_n\cdots q_{j-1}x_m^{j-n} v_{i,n} \:.
$$
As we now consider vertices $i,j$ both belonging to $\alpha$,
the values $v_{i,j}$, $v_{i,1}$, $v_{i,n}$ can easily be given explicitly.
We have for $n+1\le i \le n+d-2$
$$
v_{i,1}=(\prod_{k=i}^{n+d-2}q_k) x_m^{n+d-2-i}, \;
v_{i,n}=\frac{\eps c}{(\prod_{k=n}^{i-1}q_k) x_m^{i-n-1}}\:,
$$
and for $n+1\le i<j \le n+d-2$
$$v_{i,j}= (\prod_{k=i}^{j-1} q_k) x_m^{j-i-1}. $$
Inserting this into the equation for $v_{i,j}'$ above, one
obtains the first of the following equations,
and the expressions in the other two cases can be simplified similarly:
$$v_{i,j}'=
\left\{
\begin{array}{lcl}
-\eps c q_i\cdots q_{j-1}x_m^{j-i+1} & \text{for } i<j, \\[6pt]
-x^{-1}(1+\eps c x_m^2) & \text{for } i=j, \\[6pt]
-(q_j\cdots q_{i-1}x_m^{i-j+1})^{-1} & \text{for } i>j.
\end{array}
\right.
$$
Now the matrix $-x_m^{-1}W'$ has the form of the matrix in Proposition~\ref{prop:matrix},
with $R$ our ring of Laurent polynomials,
and parameters $s=d-2$, $y=\eps c x_m^2$, $u_i=q_{n+i}x_m$, $i=1,\ldots,s-1$.
Hence we have transformation matrices $P'\in \GL(d-2,R)$, $\det P'=(-1)^{d-1}$,
and $Q'\in \SL(d-2,R)$, such that
$$
P'\cdot W' \cdot Q' =
\Delta(q_{n+1}x_m^2, \ldots, q_{n+d-3}x_m^2,-x_m \prod_{i=n+1}^{n+d-3}(q_ix_m)^{-1}\sum_{j=0}^{d-2}(\eps cx_m^2)^j)\:.
$$
For our theorem, we allow arbitrary transformation matrices in $\GL(d-2,R)$,
and with a suitable monomial transformation matrix in $\GL(d-2,R)$ we arrive at the diagonal matrix $\Delta(\sum_{j=0}^{d-2}(\eps cx_m^2)^j,1, \ldots,1)$.
Together with the diagonal $n\times n$ matrix $\tilde W'$
from above (sitting in the upper left corner), and then a final sorting,
this gives the claimed diagonal form for~$W$; indeed, following the
inductive steps, the transforming matrices are constructed explicitly
along the way.

For the determinant of $W'$, we have from the
previous equation
$$
\det W'= (-1)^{d-2} x_m^{-(d-2)}\sum_{j=0}^{d-2}(\eps cx_m^2)^j\:.
$$
As $\tilde \eps = \eps \cdot x_m^{d-2}$, this implies
$$
\det W = \det \tilde W \cdot \det W' =
(-1)^{n+d-3} \eps \prod_{i=1}^{m} \sum_{j=0}^{d_i-2} (\eps c x_i^2)^j \:,
$$
which is the assertion we wanted to prove.
Thus we are done.
\qed

\section{Generalized polynomial friezes}\label{sec:friezes}

As a further nice feature,
we can generalize the crucial local determinant condition
of Conway-Coxeter frieze patterns
and get a detailed geometric picture of the dissection
from the associated generalized frieze.

The {\em generalized (polynomial) frieze pattern}  associated
to a dissected polygon $P_n$ with dissection $\cD$
is a periodic pattern of $n-1$ interlaced rows as before where the numbers $m_{ij}$ are now replaced by the weight polynomials $v_{ij}$.

As the edge weights $q_i$ are easy to control, here is an example where
we have specialized the edge weights to~1 and
just give the weight contributions from the pieces;
it is associated to the triangulated pentagon pictured in Section~2:

\smallskip

$$\small
\begin{array}{ccccccccccccccc}
 1 & & 1 & & 1 & & 1 & & 1 & & 1 &
 \ldots \\
 \ldots & a & & a+b+c & & c & & b+c & & a+b & & a & & \\
  ab & & a(b+c) & & (a+b)c & & bc & & a(b+c)+bc & & ab &
  \ldots
  \\
 \ldots & abc & & abc & & abc & & abc & & abc & & abc & &
 \\
\end{array}
$$

\smallskip

In the example, we can already observe that the local $2\times 2$ determinants, i.e., the $2\times 2$-minors
of the weight matrix $W_\cD$ (including the ones
between the last and first column)
are 0 or special monomials.
\medskip

We have to introduce a little bit more notation.
Since the rows and columns of the matrix $W_\cD=(v_{i,j})$ are indexed by
the vertices of the dissected $n$-gon $P_n$
(in counterclockwise order), any $2\times 2$-minor
corresponds to a pair of boundary edges, say
$e=e_i=(i,i+1)$ and $f=e_j=(j,j+1)$. Then the corresponding minor has the form
$$d(e,f) := d_\cD (e,f) := \det \left(
\begin{array}{cc} v_{i,j} & v_{i,j+1} \\ v_{i+1,j} & v_{i+1,j+1}
\end{array} \right).
$$

We now have a generalization of the corresponding theorem
on generalized (arithmetical) friezes in~\cite{BHJ}, i.e.,
we compute all the determinants $d(e,f)$, and we see that we have a
condition for nonzero determinants based on
a ``zig-zag" connection via diagonals between
the two boundary edges under consideration
(see part (c) of Theorem~\ref{thm:zigzag} below for the precise definition).
If there is such a zig-zag connection from
$e=(i,i+1)$ to $f=(j,j+1)$,
those pieces of the dissection which have
at most one vertex on the
counterclockwise route from $j+1$ to $i$ on the polygon
play a special r\^ole:
the determinant $d(e,f)$
is a monomial to which only the edges $e_i,\ldots, e_j$ and these
``zig" pieces contribute; see below for the precise statement
and also for an example.

\begin{Theorem} \label{thm:zigzag}
Let $\cD=\{\al_1, \ldots,\al_m\}$ be a dissection of $P_n$
of type $\{d_1, \ldots, d_m\}$,
with associated indeterminates $x_1,\ldots,x_m$, respectively.
Set $\eps=\prod_{k=1}^n q_k$, $c=\prod_{l=1}^m x_l^{d_l-2}$.
Then the following holds for the $2\times 2$-minors of $W_\cD$
associated to boundary edges $e=(i,i+1)\neq f=(j,j+1)$ of $P_n$:
\begin{enumerate}
\item[{(a)}] $d(e,e)=-\eps c $.
\item[{(b)}] $d(e,f)\neq 0$ if and only if
there exists a sequence
$$e=z_0,z_1,\ldots,z_{s-1},z_{s}=f$$
with $s\in \N_0$, where $z_1, \ldots, z_{s-1}$ are diagonals of the dissection,
such that the following holds for every $k\in\{0,1,\ldots,s-1\}$:
\begin{itemize}
\item[{(i)}] $z_k$ and $z_{k+1}$ belong to a common piece $p_k \in \cD$;
\item[{(ii)}] the pieces $p_0, \ldots , p_{s-1}$ are pairwise different;
\item[{(iii)}] $z_{k}$ is incident to $z_{k+1}$.
\end{itemize}
\smallskip

We call such a sequence a {\em zig-zag sequence} from $e$ to $f$.
If there is such a sequence then those pieces of~$\cD$ which have
at most one vertex on the counterclockwise route from $j+1$ to $i$
around $P_n$ are called {\em zig pieces} for $(e,f)$
(the others are the {\em zag pieces}).
\smallskip

\item[{(c)}]
If there exists a zig-zag sequence from $e$ to $f$, then
$$d(e,f)= q_iq_j\prod_{k=i+1}^{j-1} q_k^2 \prod_\be x_\be^{2(d_\be -2)} \:,$$
where $k$ runs over all vertices  in the counterclockwise
walk from  $i+1$ to $j-1$ around~$P_n$,
and $\be$ runs over all zig pieces for $(e,f)$.
\end{enumerate}
\end{Theorem}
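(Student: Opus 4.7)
The plan is to handle part (a) by direct calculation and to establish (b) and (c) together by induction on the number $m$ of pieces of $\cD$, closely paralleling the inductive setup in the proof of Theorem~\ref{thm:detandsmith}. For part (a), the calculation is immediate from Remark~\ref{rem:specialweights}(i), giving $v_{i,i}=0$ and $v_{i,i+1}=q_i$, combined with the complementary symmetry in Corollary~\ref{cor:complementarysymmetry}, which yields $v_{i+1,i}=\psi_\cD(q_i)=c\eps/q_i$; hence $d(e_i,e_i)=-q_i\cdot c\eps/q_i=-\eps c$.

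For the base case $m=1$ in the induction for (b) and (c), I would use the explicit form of the weight matrix from Proposition~\ref{prop:polygondet}, whose entries are single monomials; a routine $2\times 2$ expansion will show that $d(e_i,e_j)=0$ unless $e_i$ and $e_j$ share a vertex, in which case $d(e_i,e_j)=q_iq_j$. This matches the zig-zag description because, with no diagonals available, a zig-zag from $e$ to $f$ must have $s\le 1$, exists iff $e,f$ are incident within the unique piece, and no piece can be a zig piece. For the inductive step, I would fix a boundary $d$-gon $\al=\al_m$ of $\cD$ with interior diagonal $\{1,n\}$ and vertices $1,n,n+1,\ldots,n+d-2$, let $\tilde\cD$ denote the dissection of $P_n$ obtained by removing $\al$, and give the new boundary edge $\tilde e_n=\{n,1\}$ the combined weight $\tilde q_n=q_n\cdots q_{n+d-2}x_m^{d-2}$. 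By Case~1 of the proof of Theorem~\ref{thm:complementarysymmetry}, $v_{i,j}=v_{i,j}^{\tilde\cD}$ for all $i,j\in\{1,\ldots,n\}$.

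I would then split into three cases for the pair $(e,f)$. If both edges lie in $\{e_1,\ldots,e_{n-1}\}$, then $d_\cD(e,f)=d_{\tilde\cD}(e,f)$; any zig-zag in $\cD$ must avoid $\al$ (once entered via $\{1,n\}$ it could only be exited via $\{1,n\}$ again, forcing a reuse of $\al$ and violating condition~(ii)), so zig-zags in $\cD$ and $\tilde\cD$ are in bijection, and the formulas match since $\tilde q_n^2$ absorbs exactly the factor $x_m^{2(d-2)}$ appearing in the $\cD$-formula when the route wraps through $\al$'s vertices. If both edges lie on the boundary of $\al$, the walks through $\al$ are rigidly structured, and I would compute the minor directly using the explicit entries of the lower right block $W'$ extracted in the proof of Theorem~\ref{thm:detandsmith}. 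If one edge lies in each region, I would apply the decomposition
\[
v_{i,j} = v_{i,n}\cdot q_n\cdots q_{j-1}x_m^{j-n} + v_{i,1}\cdot (q_j\cdots q_{n+d-2}x_m^{n+d-1-j})^{-1}
\]
from that same proof (together with its $\psi_\cD$-complement) to factor the minor and reduce it to a combination of minors over $\tilde\cD$, matching the zig-zag formula according to whether the zig-zag crosses $\{1,n\}$ via $\al$.

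The main obstacle will be this last case: I must classify the zig-zag sequences that cross the diagonal $\{1,n\}$, determine in each subcase whether $\al$ itself appears as a zig piece (as opposed to serving merely as a bridging piece), and verify that the algebraic factorization produces precisely the claimed monomial with its squared edge and piece weights. The other two cases are comparatively routine once the inductive correspondence of walks via $\tilde q_n$ and the block decomposition of $W_\cD$ are in hand.
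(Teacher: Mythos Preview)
Your plan for part~(a) is correct and matches the paper exactly. For parts~(b) and~(c) your approach is sound and would succeed, but it is organised differently from the paper's proof. The paper does not fix a boundary piece in advance and then case on the position of $(e,f)$; instead it works \emph{relative to the pair $(e,f)$}. It first shows that a boundary piece whose vertices all lie strictly between $j+1$ and $i$ may be removed without changing $d(e,f)$, while a boundary piece whose vertices all lie strictly between $i+1$ and $j$ contributes exactly the factor $q_\beta^2 x_\beta^{2(d_\beta-2)}$. Having peeled off all such pieces, it reduces to a minimal dissected subpolygon in which $f$ itself lies on a boundary piece~$\alpha$ with interior diagonal~$t$, and then shows $d_\cD(e,f)=q_j\,d_{\cD'}(e,t)$ (or the analogous formula on the other side), after which induction on the number of pieces finishes the argument.

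The trade-off is this: your scheme reuses the block decomposition and the explicit entry formulae already established for Theorem~\ref{thm:detandsmith}, which is economical in that sense, but the price is your third case (one edge on each side of the removed piece), where you must factor the minor as $d_{\tilde\cD}(e_i,\tilde e_n)$ times a $2\times 2$ determinant in the auxiliary quantities $A_s,B_s$ and then check separately the subcases $j=n$, $n+1\le j\le n+d-3$, and $j=n+d-2$. The paper's approach avoids this multiway split entirely, because by always peeling toward the piece containing~$f$ it never needs to handle a pair straddling an unrelated boundary piece. One small imprecision in your plan: when both edges lie on~$\alpha$, the matrix $W'$ from the proof of Theorem~\ref{thm:detandsmith} consists of \emph{transformed} entries on the index set $\{n+1,\ldots,n+d-2\}$, so it does not directly give the original $v_{i,j}$ (and misses the boundary edges $e_n$ and $e_{n+d-2}$ touching vertices $n$ and~$1$); you would instead compute those minors from the raw walk weights, which is straightforward since all relevant $v_{i,j}$ are single monomials.
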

\medskip

\begin{Example}
Before we embark on the proof,
we consider the following polygon dissection as an example.
\[
  \begin{tikzpicture}[auto]
    \node[name=s, shape=regular polygon, regular polygon sides=5, minimum size=2.5cm, draw] {};
     \draw[thick,blue] (s.corner 1) to node[above=1pt] {$e$} (s.corner 2);
      \draw[thick,blue] (s.corner 3) to node[below=2pt] {$f$}  node[above=-1pt]  {{$\al_3$}}(s.corner 4);
    \draw[thick,blue] (s.corner 2) to node[above=1pt] {$z_2$} (s.corner 4);
    \draw[thick,blue] (s.corner 2) to node[above=-1pt] {$z_1$} (s.corner 5);
    \draw[shift=(s.corner 1)]  node[above]  {{\small 1}};
    \draw[shift=(s.corner 1)]  node[below=2pt]  {{$\al_1$}};
  \draw[shift=(s.corner 2)]  node[left]  {{\small 2}};
  \draw[shift=(s.corner 3)]  node[below]  {{\small 3}};
  \draw[shift=(s.corner 4)]  node[below]  {{\small 4}};
  \draw[shift=(s.corner 4)]  node[above=16pt]  {{$\al_2$}};
  \draw[shift=(s.corner 5)]  node[right]  {{\small 5}};
  \end{tikzpicture}
\]
Note that we have here
$$
d(e,f)=\det \left(
\begin{array}{cc} q_1q_2(x_1+x_2+x_3) & q_1q_2q_3(x_1+x_2)x_3 \\
q_2 & q_2q_3x_3
\end{array} \right).
$$
We now illustrate the computation of  the determinant via the theorem.
The sequence $e,z_1, z_2,f$ is a zig-zag sequence from $e$ to $f$.
Here, we have only one zig piece for $(e,f)$, namely $\al_3$,
that contributes to the monomial $d(e,f)$.
Hence
$$d(e,f)=q_1q_3q_2^2x_3^2 \:.$$
From $f$ to $e$, we have the zig-zag sequence $f,z_2,z_1,e$,
and here the contributing zig pieces for $(f,e)$ are $\al_1$ and $\al_2$.
Hence
$$d(f,e)=q_1q_3q_4^2q_5^2 x_1^2 x_2^2 \:.$$
\end{Example}
\medskip

\begin{proof} {\em (Theorem~\ref{thm:zigzag})}
Let the edges $e=(i,i+1)$ and $f=(j,j+1)$ be given.
We first consider the effect of boundary pieces between $i$ and $j$ on the minor
$d(e,f)$.
In addition to the weights already associated to the edges of $P_n$ and the pieces of $\cD$, each diagonal $t$ of the dissection gets the weight $q_t=1$.

Assume that $\al\in \cD$ is a boundary piece with all
its vertices between $j+1$ and $i$ (in counterclockwise order).
Let $\mathcal P'$ be the polygon obtained by removing $\al$ from~$\mathcal P$,
and let $\cD'=\cD\setminus\al$ be the corresponding dissection;
we keep (here and in similar situations below)
the same labels for the vertices
(rather than renumbering them in the smaller polygon).
Then
$$d_\cD (e,f) = \det \left(
\begin{array}{cc} v_{i,j} & v_{i,j+1} \\ v_{i+1,j} & v_{i+1,j+1}
\end{array} \right)
=d_{\cD'}(e,f)\:,
$$
as the relevant entries in the weight matrices $W_\cD$ and
$W_{\cD'}$ are exactly the same.

Now let $\be\in \cD$ be a boundary piece with all
its vertices (strictly) between $i$ and $j+1$ (in counterclockwise order),
cut off by the diagonal $t$ (say), and let
$q_\be$ be the product of the weights of the edges $e_k$ belonging to~$\be$.
Let $\mathcal P'$ be the polygon obtained by removing $\be$ from $\mathcal P$,
and let $\cD'=\cD\setminus\be$ be the corresponding dissection.
Now take $r\in \{i,i+1\}$, $s\in \{j,j+1\}$. Let $v_{r,s}$ and $v_{r,s}'$ be the
corresponding entries in the weight matrices $W=W_\cD=(v_{u,v})$ and
$W'=W'_{\cD'}=(v_{u,v}')$, respectively,
where the weights on the edges and pieces are inherited and the
boundary edge $t$ of $\mathcal P'$ has weight $q_t=1$ as stated above;
let $d_{\cD'}'(e,f)$ be the minor to $(e,f)$ in $W'$.
Considering the walks from $r$ to $s$, one easily sees that
$$v_{r,s}=v_{r,s}' \, q_\be \, x_{\be}^{d_\be-2}$$
and thus
$$d_\cD(e,f)=d_{\cD'}'(e,f) \, x_\be^{2(d_\be -2)} q_\be^2 \:.$$
Hence we may cut off boundary pieces $\be$ from $\cD$ on the way from $i+1$ to $j$, recording their contribution $x_\be^{2(d_\be -2)} q_\be^2$ as a factor to the product
as just stated, while on the way from $j+1$ to $i$ the contribution
of boundary pieces is just~1.

\medskip

(a) If $e=(i,i+1)$ then by Theorem~\ref{thm:complementarysymmetry}  we have
$$d(e,e) = \det \begin{pmatrix} v_{i,i} & v_{i,i+1} \\ v_{i+1,i} & v_{i+1,i+1}
\end{pmatrix}
= \det \begin{pmatrix} 0 & q_i \\ \bar q_i & 0 \end{pmatrix} = -\eps c.
$$
\smallskip

(b) and (c) Let $e=(i,i+1)$ and $f=(j,j+1)$ be different boundary edges
of $P_n$.

{\it Case~1:} $e$ and $f$ belong to a common piece $\al$ of $\cD$.

Then, if there is a zig-zag sequence of diagonals,
$e$ and $f$ must be incident,
as the pieces along the zig-zag sequence have to be pairwise
different by condition (ii).

First assume that their common vertex is $j=i+1$.
Since $e$ and $f$ belong to $\al$,
by Definition~\ref{def:weights} we get
$$d(e,f) = \det \begin{pmatrix} v_{i,j} & v_{i,j+1} \\ v_{i+1,j} & v_{i+1,j+1}
\end{pmatrix}
= \det \begin{pmatrix} q_i &  v_{i,j+1} 
\\ 0 & q_{i+1} \end{pmatrix} =
q_iq_{i+1},
$$
as claimed in the statement of the theorem.
The situation where $i=j+1$ is the common vertex of $e$ and $f$
is complementary to the previous case, i.e.,
$$d(e,f) = \det \begin{pmatrix} v_{i,j} & v_{i,j+1} \\ v_{i+1,j} & v_{i+1,j+1}
\end{pmatrix}
= \det \begin{pmatrix} \bar q_j & 0 \\
v_{i+1,j}
& \bar q_i \end{pmatrix} =
\bar q_i \cdot \bar q_j =
q_iq_j \prod_{k=i+1}^{j-1}q_k^2 \prod_{\be\in \cD} x_\be^{2(d_\be -2)}.
$$
In the product over the vertices~$k$, we go around the polygon in counterclockwise order
from $i+1$ to $j-1$. This is the claimed assertion in this case.

Now we assume that $e$ and $f$ are not incident, and we have to show that $d(e,f)=0$.
As argued before, we may successively cut off boundary pieces from the dissection,
and then we only have to show that $d(e,f)=0$ when we are in the situation of two
non-incident boundary edges in a (non-dissected) polygon~$\al$.

In this case we may assume that $1\le i< j-1 < n$ and then
$$d(e,f) = d(i,j)=
\det \begin{pmatrix} q_i \cdots q_{j-1}x_\al^{j-i-1} & q_i\cdots q_j x_\al^{j-i} \\
q_{i+1}\cdots q_{j-1}x_\al^{j-i-2}& q_{i+1}\cdots q_j x_\al^{j-i-1} \end{pmatrix}
 = 0.
$$
Thus we are now done with the situation that $e,f$ belong to a common piece of~$\cD$.
\smallskip

{\em Case~2:} $e$ and $f$ do not belong to a common piece
of~$\cD$.

We have to show that $d(e,f)=0$ when $e,f$ are not connected
by a zig-zag of diagonals,
and otherwise we get a monomial from a zig-zag sequence from $e$ to $f$
where only the zig pieces for $(e,f)$ contribute to the determinant.

Without loss of generality, we may assume $1\le i<j <n$.

We have already discussed at the beginning
of the proof how the removal of a boundary piece
between $i+1$ and $j$ or
between $j+1$ and $i$ (always in counterclockwise order) affects
the determinant.
Indeed, if all the vertices of a boundary piece $\be$ are between
$i+1$ and $j$ ($\be$ is then a zig piece for $(e,f)$),
we get the full contribution $x_\be^{2(d_\be -2)}q_\be^2$
as a factor to the product,
and in the other case where
all the vertices of a boundary piece $\be$ are between $j+1$ and $i$
($\be$ is then a zag piece),
we only get a factor~$1$.
Hence, in our situation we may cut off all such boundary pieces
until we arrive at a minimal convex dissected subpolygon of $P_n$
containing $e$ and~$f$. Via this reduction we may assume that $f=(j,j+1)$
belongs to a boundary piece $\al$ of $P_n$ with internal diagonal
$t$ with endpoints $k$ and $l$, as in the following figure
\[
  \begin{tikzpicture}[auto]
    \node[name=s, shape=regular polygon, regular polygon sides=16, minimum size=3cm, draw] {};
 \draw[thick] (s.corner 1) to (s.corner 12);
  \draw[thick] (s.corner 4) to (s.corner 5);
  \draw[thick] (s.corner 13) to (s.corner 14);
  \draw[shift=(s.corner 14)]  node[left=5pt]  {$\al$};
  \draw[shift=(s.corner 4)]  node[right=40pt]  {$t$};
 \draw[shift=(s.corner 6)]  node[right=25pt] {$\mathcal{P'}$} ;
  \draw[shift=(s.corner 1)]  node[above]  {{\small $k$}};
  \draw[shift=(s.corner 12)]  node[below]  {{\small $l$}};
  \draw[shift=(s.corner 13)]  node[right]  {{\small $j$}};
  \draw[shift=(s.corner 14)]  node[right]  {{\small $j+1$}};
  \draw[shift=(s.corner 4)]  node[above]  {{\small $i$}};
  \draw[shift=(s.corner 5)]  node[left]  {{\small $i+1$}};
  \end{tikzpicture}
\]
If $f$ and $t$ are not incident,
then a zig-zag sequence from $e$ to $f$  can not
exist since condition (iii) can not be satisfied.
On the other hand, we have
$v_{i,j+1}=v_{i,j}x_\al q_j$ and
$v_{i+1,j+1}=v_{i+1,j}x_\al q_j$ and hence
$d(e,f)= d(i,j)= 0$.

Now assume that $f$ and $t$ are incident.
First consider the situation where $j=l$.
As before,
let $v_{r,s}'$ denote the weight polynomials for the polygon $\mathcal{P'}$
obtained
from $P_n$ by removing~$\al$, with respect to the dissection
$\cD'=\cD\setminus\al$,
where we recall that the weight $q_t$ of the boundary edge $t$ of $\mathcal{P'}$
is specialized to~1.
For walks from $i$ or $i+1$ to $j+1$ around~$P_n$,
we have to distinguish between the
ones where we choose the piece $\al$ at vertex~$j$, and those where
we use only pieces from $\cD'$ at~$j$.
Then we have
$$
\begin{array}{rcl}
d_\cD(e,f) = d_\cD(i,j)
&=&
\det \begin{pmatrix} v_{i,j} & v_{i,j}x_\al q_j+v_{i,k}'q_j \\
v_{i+1,j} & v_{i+1,j}x_\al q_j+v_{i+1,k}'q_j
\end{pmatrix} \\[20pt]
&=&
\det \begin{pmatrix} v_{i,j}' & v_{i,k}'q_j \\
v_{i+1,j}' & v_{i+1,k}'q_j
\end{pmatrix} \\[20pt]
&=&
q_j d_{\cD'}'(i,k)= q_j d_{\cD'}'(e,t) \:,
\end{array}
$$
where we keep in mind that in the polygon $\mathcal{P'}$ dissected by $\cD'$,
the vertices $j,k$ are indeed successive vertices as endpoints of
the boundary edge~$t$.

Now, if a zig-zag sequence from $e$ to $f$ exists, then
clearly the final diagonal has to be $t$.
Hence a zig-zag sequence from $e$ to $f$
exists in the dissected polygon $\mathcal P$
if and only if there exists a zig-zag
sequence from $e$ to~$t$ in the dissected subpolygon~$\mathcal{P'}$.

Thus, by induction and the formula for the minors above,
we can already conclude that
$d_\cD (e,f)= 0$ unless we find a zig-zag sequence
from $e$ to~$f$.
Furthermore, in the case that we do have a zig-zag sequence
from $e$ to $f$,
we know by induction that
$$d_{\cD'}'(e,t) =
q_iq_t\prod_{r=i+1}^{l-1} q_r^2 \prod_\be x_\be^{2(d_\be -2)}
= q_i\prod_{r=i+1}^{l-1} q_r^2 \prod_\be x_\be^{2(d_\be -2)} $$
where $\be$ runs over the zig pieces for $(e,t)$, i.e.,
having vertices between $i+1$ and~$l$
with the exception of at most one vertex.
Hence
$$d_{\cD}(e,f) =
q_j d_{\cD'}'(e,t)= q_iq_j \prod_{r=i+1}^{j-1} q_r^2 \prod_\be x_\be^{2(d_\be -2)} \:,
$$
with $\be$ still running over the same set of pieces,
as the piece $\al$ is not a zig piece for~$(e,f)$.
Thus the minor is indeed the monomial stated in the theorem.

In the case where $j+1=k$, a similar reasoning shows
$$d_{\cD}(e,f) =
q_j( \prod_{r=l}^{j-1} q_r^2) x_\al^{2(d_\al -2)}  d_{\cD'}'(i,l)
= q_iq_j \prod_{r=i+1}^{j-1} q_r^2 \prod_\be x_\be^{2(d_\be -2)} \:,
$$
where now the piece $\al$ is a zig piece for $(e,f)$ and is thus included
in the set of zig pieces $\be$ contributing to the monomial.

Thus the proof of Theorem~\ref{thm:zigzag} is complete.
\end{proof}

\bigskip

{\bf Acknowledgments.}
The author is grateful to Prof.\ Jiping Zhang for the invitation to present
the results at the
Third International Symposium on
Groups, Algebras and Related Topics
at the
Beijing International Center for Mathematical Research of
Peking University
in June 2013.
Thanks go also to the referee for
useful comments on the first version.

\end{document}